\newtheorem{theorem}{Theorem}[section]
\newtheorem{lemma}{Lemma}[section]
\theoremstyle{definition}
\theoremstyle{remark}
\newtheorem{remark}{Remark}[section]
\numberwithin{equation}{section}
\theoremstyle{plain}
\newtheorem{corollary}{Corollary}[section]
\newtheorem{proposition}{Proposition}[section]
\newtheorem{example}{Example}[section]
\begin{document}
\title[Primes in continued fraction expansions]{\textbf{Prime numbers in typical continued fraction expansions}}
\vspace{0.2cm}
\author{Tanja I. Schindler}
\address{Fakult\"{a}t f\"{u}r Mathematik, Universit\"{a}t Wien, Oskar-Morgenstern-Platz 1,
1090 Wien, Austria }
\email{tanja.schindler@univie.ac.at}

\author{Roland Zweim\"{u}ller}
\address{Fakult\"{a}t f\"{u}r Mathematik, Universit\"{a}t Wien, Oskar-Morgenstern-Platz 1,
1090 Wien, Austria }
\email{roland.zweimueller@univie.ac.at}

\thanks{\textit{Acknowledgement.} The authors are indebted to M.\ Thaler for valuable comments and suggestions on an
earlier version and to T.\ Trudgian for useful discussions regarding the error term of the prime number theorem.
This research was supported by the Austrian Science Fund FWF: P 33943-N}
\subjclass[2000]{Primary 11K50, 28D05, 37A25, 37C30, 37A50.}
\keywords{continued fractions, prime numbers, stochastic limit theorems}

\begin{abstract}
We study, from the viewpoint of metrical number theory and (infinite) ergodic
theory, the probabilistic laws governing the occurrence of prime numbers as
digits in continued fraction expansions of real numbers.
\end{abstract}
 \maketitle

\section{Introduction}

Ever since Gauss \cite{G} declared his interest in the intriguing statistical
properties of sequences of \emph{digits} $\mathsf{a}_{n}(x)$, $n\geq1$, in the
\emph{continued fraction} (\emph{CF}) \emph{expansion} of real numbers $x\in
I:=(0,1]$,
\[
x=\left[  \mathsf{a}_{1}(x),\mathsf{a}_{2}(x),\ldots\right]  =\frac
{1}{\mathsf{a}_{1}(x)+\dfrac{1}{\mathsf{a}_{2}(x)+\dfrac{1}{\mathsf{a}%
_{3}(x)+\cdots}}}%
\]
(and, in particular, mentioned that this led to questions he could not answer),
the \emph{metrical theory of continued fractions} has attracted many
mathematicians' attention. In the present paper we will be interested in the
\emph{prime digits} of $x$, i.e.\ those $\mathsf{a}_{n}(x)$ which happen to
belong to the set $\mathbb{P}$ of prime numbers. To single them out, we
define, for $x\in I$ and $n\geq1$,
\[
\mathsf{a}_{n}^{\prime}(x):=\mathbbm{1}_{\mathbb{P}}(\mathsf{a}_{n}(x))\cdot
\mathsf{a}_{n}(x)=\left\{
\begin{array}
[c]{ll}%
\mathsf{a}_{n}(x) & \text{if }\mathsf{a}_{n}(x)\in\mathbb{P}\text{,}\\
0 & \text{otherwise.}%
\end{array}
\right.
\]
(There is hardly any danger of misinterpreting this phonetically perfect
symbol as a derivative.) The purpose of this note is to point
out that it is in fact possible - with the aid of the prime number theorem
and recent work in (infinite) ergodic theory and in the probability theory
of dynamical systems - to derive a lot of information about the occurrences
and values of prime digits in CF-expansions of (Lebesgue-) typical numbers.
Besides stating the theorems themselves it is also our aim to show some newer more general results in ergodic theory in action. While many analogous versions of the following statements have directly been proven for the continued fraction digits, today, it is possible to deduce them or the version for the prime digits from more general theorems.

\section{Main Results - Pointwise matters}

We first consider questions about the pointwise behaviour of the sequence
$(\mathsf{a}_{n}^{\prime})_{n\geq1}$ on $I$. Throughout, $\lambda$ denotes
Lebesgue measure on (the Borel $\sigma$-field $\mathcal{B}_{I}$ of) $I$, and
\emph{almost everywhere }(\emph{a.e.}) is meant w.r.t.\ $\lambda$. For the sake
of completeness, we also include a few easy basic facts, e.g.\ that for a.e.\
$x\in I$, the proportion of those $k\in\{1,\ldots,n\}$ for which
$\mathsf{a}_{k}(x)$ is prime converges:

\begin{proposition}
[\textbf{Asymptotic frequency of prime digits}]\label{P_AsyFreqPrimes}We have
\begin{equation*}
\lim_{n\rightarrow\infty}\,\frac{1}{n}\sum_{k=1}^{n}\mathbbm{1}_{\mathbb{P}}%
\circ\mathsf{a}_{k}=\frac{1}{\log2}\,\log\prod_{\mathrm{p}\in\mathbb{P}%
}\left(  1+\frac{1}{\mathrm{p}(\mathrm{p}+2)}\right)  \text{ \quad a.e.}%
\end{equation*}
\end{proposition}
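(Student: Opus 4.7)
The plan is to apply the Birkhoff ergodic theorem for the Gauss map $T\colon I\to I$, $T(x)=\{1/x\}$, which preserves the Gauss measure $\mu$ with density $\frac{1}{\log 2}\cdot\frac{1}{1+x}$ and is ergodic with respect to it. Since the first digit satisfies $\mathsf{a}_1(x)=k$ precisely when $x\in(1/(k+1),1/k]$, and since $\mathsf{a}_k=\mathsf{a}_1\circ T^{k-1}$, the ergodic averages of $\mathbbm{1}_{\mathbb{P}}\circ\mathsf{a}_1$ along $T$ are exactly the quantities on the left-hand side.

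First I would note that $\mathbbm{1}_{\mathbb{P}}\circ\mathsf{a}_1$ is bounded, hence integrable, so Birkhoff gives $\mu$-a.e.\ convergence of $\frac{1}{n}\sum_{k=1}^{n}\mathbbm{1}_{\mathbb{P}}\circ\mathsf{a}_k$ to the integral $\int\mathbbm{1}_{\mathbb{P}}\circ\mathsf{a}_1\,d\mu$. Because $\mu$ and $\lambda$ are mutually absolutely continuous on $I$, this convergence automatically holds $\lambda$-a.e.\ as well.

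Next I would compute the limit explicitly:
\[
\int\mathbbm{1}_{\mathbb{P}}\circ\mathsf{a}_1\,d\mu
=\sum_{\mathrm{p}\in\mathbb{P}}\mu\!\left(\left(\tfrac{1}{\mathrm{p}+1},\tfrac{1}{\mathrm{p}}\right]\right)
=\frac{1}{\log 2}\sum_{\mathrm{p}\in\mathbb{P}}\log\frac{(\mathrm{p}+1)^{2}}{\mathrm{p}(\mathrm{p}+2)}
=\frac{1}{\log 2}\sum_{\mathrm{p}\in\mathbb{P}}\log\!\left(1+\tfrac{1}{\mathrm{p}(\mathrm{p}+2)}\right),
\]
which, upon exchanging the sum and the logarithm, yields precisely the expression in the proposition. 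The convergence of the product is immediate from $\sum_{\mathrm{p}}\frac{1}{\mathrm{p}(\mathrm{p}+2)}<\infty$.

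There is no real obstacle here: the argument is a direct application of Birkhoff's theorem combined with the standard computation of the Gauss-measure of a cylinder $\{\mathsf{a}_1=k\}$. The only points worth emphasizing are the passage from $\mu$-a.e.\ to $\lambda$-a.e.\ convergence (a consequence of equivalence of the two measures, not of any refined equidistribution result) and the telescoping that rewrites the sum of logarithms as the logarithm of the displayed infinite product.
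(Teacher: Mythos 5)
Your proof is correct and matches the paper's approach: both apply Birkhoff's ergodic theorem to the Gauss map with the Gauss measure and identify the limit as $\mu_{\mathfrak{G}}(\{\mathsf{a}_1\in\mathbb{P}\})=\sum_{\mathrm{p}\in\mathbb{P}}\mu_{\mathfrak{G}}(I_{\mathrm{p}})$. You merely spell out the straightforward computation of that sum, which the paper leaves implicit.
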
%

\vspace{0.4cm}%

The results to follow can best be understood (and proved) by regarding
$(\mathsf{a}_{n}^{\prime})$ as a stationary sequence with respect to the Gauss
measure (cf. \S \ref{Sec_WarmUp} below). The first statement of the next theorem is parallel to the
classical Borel-Bernstein theorem (cf. \cite{Bo, Be})
the third and fourth statements are in accordance with \cite{KS}. As usual,
\emph{i.o.} is short for "infinitely often", i.e.\, "for infinitely many
indices". We denote the \emph{iterated logarithms} by$\ \log_{1}:=\log$ and
$\log_{m+1}:=\log\circ\log_{m}$, $m\geq1$.

Furthermore, we define the maximal entry $\mathsf{M}_{n}^{\prime}:=\max_{1\leq k\leq n}\mathsf{a}_{k}^{\prime}$, $n\geq1$.

\begin{theorem}
[\textbf{Pointwise growth of prime digits and maxima}]\label{T_PtwGrowth1}%
~

\noindent\textbf{a)} Assume that $(b_{n})_{n\geq1}$ is a sequence in $(1,\infty)$.
Then
\begin{equation}\label{eq: cond bn}
\lambda(\{\mathsf{a}_{n}^{\prime}>b_{n}\text{ i.o.}\})=\left\{
\begin{array}
[c]{ll}%
1 & \text{if }\sum_{n\geq1}\frac{1}{b_{n}\log b_{n}}=\infty\text{,}\\
0 & \text{otherwise.}%
\end{array}
\right.
\end{equation}
\vspace{0.4cm}
\noindent\textbf{b)} Moreover, if $(b_{n})_{n\geq1}$ is non-decreasing, then
\begin{equation}
\lambda(\{\mathsf{a}_{n}^{\prime}>b_{n}\text{ i.o.}\})=\lambda(\{\mathsf{M}%
_{n}^{\prime}>b_{n}\text{ i.o.}\})\text{. }\label{Eq_DigitvsMax}%
\end{equation}
\textbf{c)} Let $(c_n)_{n\geq1}$ and $(d_n)_{n\geq1}$ be sequences in $(1,\infty)$
with $d_n \to \infty$ and $c_n\leq d_n^{0.475}$ for large $n$. Then
\begin{equation*}
\lambda(\{d_n\leq \mathsf{a}_{n}^{\prime}\leq d_n (1+1/c_n)\text{ i.o.}\})
=\left\{
\begin{array}
[c]{ll}%
1 & \text{if }\sum_{n\geq1}\frac{1}{c_{n}d_n\log (d_n)}=\infty\text{.}\\
0 & \text{otherwise.}%
\end{array}
\right.
\end{equation*}
\textbf{d)} Let $(d_n)_{n\geq1}$ be a sequence of primes, then
\begin{equation*}
\lambda(\{\mathsf{a}_{n}^{\prime}=  d_n  \text{ i.o.}\})=\left\{
\begin{array}
[c]{ll}%
1 & \text{if }\sum_{n\geq1}\frac{1}{d_n^2}=\infty\text{,}\\
0 & \text{otherwise.}%
\end{array}
\right.
\end{equation*}
\end{theorem}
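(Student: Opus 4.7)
The plan is to pass to the Gauss measure $\mu$ on $I$, of density $x \mapsto 1/((1+x)\log 2)$, which is equivalent to $\lambda$ and invariant under the Gauss map $T(x)=\{1/x\}$. Under $\mu$ the digit sequence $(\mathsf{a}_n)_{n\geq 1}$ is stationary, and the Gauss system is exponentially $\psi$-mixing; this will substitute for independence in the divergence halves of the Borel--Cantelli dichotomies in (a), (c), (d). The analytic input throughout is the prime number theorem $\pi(t)\sim t/\log t$, supplemented in (c) by the Baker--Harman--Pintz short-interval estimate.

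The first step is to derive single-digit asymptotics for $\mathsf{a}_1^{\prime}$. From the classical formula $\mu(\mathsf{a}_1=k)=\frac{1}{\log 2}\log\frac{(k+1)^2}{k(k+2)}\sim\frac{1}{(\log 2)k^2}$, combined with Abel summation using $\pi(t)\sim t/\log t$ (which yields $\sum_{p>b}p^{-2}\sim 1/(b\log b)$), one obtains
\[
\mu(\mathsf{a}_1^{\prime}>b)\sim\frac{1}{(\log 2)\,b\log b},\qquad \mu(\mathsf{a}_1^{\prime}=p)\sim\frac{1}{(\log 2)\,p^2}\ \ (p\in\mathbb{P}),
\]
\[
\mu\bigl(d\leq\mathsf{a}_1^{\prime}\leq d(1+1/c)\bigr)\sim\frac{1}{(\log 2)\,c\,d\log d}.
\]
The third formula invokes the uniform asymptotic $\pi(x+h)-\pi(x)\sim h/\log x$ for $h\geq x^{0.525}$ (Baker--Harman--Pintz, 2001); the hypothesis $c_n\leq d_n^{0.475}$ in (c) is calibrated precisely to make this applicable with $x=d_n$ and $h=d_n/c_n$.

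With these mass estimates, each of (a), (c), (d) becomes a Borel--Cantelli dichotomy for the events $E_n:=\{\mathsf{a}_n^{\prime}\in A_n\}$ under $\mu$. The convergence half in each case is immediate from the classical Borel--Cantelli lemma together with $\lambda\sim\mu$. For the divergence half, since the $E_n$ are single-coordinate cylinders of the $\psi$-mixing sequence $(\mathsf{a}_n)$, a dynamical Borel--Cantelli lemma (in the spirit of \cite{KS} and the classical $\psi$-mixing literature) gives $\mu(\limsup E_n)=1$ whenever $\sum_n\mu(E_n)=\infty$. I expect this mixing-based divergence step to be the principal technical point, although it is standard for the Gauss system; the three statements then drop out upon comparing the above asymptotics with the arithmetic series in the theorem.

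Finally, (b) follows from (a) by an elementary reduction. The inclusion $\{\mathsf{a}_n^{\prime}>b_n\text{ i.o.}\}\subseteq\{\mathsf{M}_n^{\prime}>b_n\text{ i.o.}\}$ is trivial. For the reverse: if $(b_n)$ is bounded, ergodicity already gives that prime digits exceed the bound i.o., so both sides equal $1$. If $b_n\to\infty$ and $\sum 1/(b_n\log b_n)<\infty$, then (a) ensures $\mathsf{a}_k^{\prime}(x)\leq b_k$ for all $k\geq K_0(x)$ almost surely; monotonicity of $(b_n)$ then forces $\mathsf{M}_n^{\prime}(x)\leq\max(C(x),b_n)$ with $C(x):=\max_{k<K_0(x)}\mathsf{a}_k^{\prime}(x)$ finite, hence $\mathsf{M}_n^{\prime}(x)\leq b_n$ eventually since $b_n\to\infty$. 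The remaining subcase ($b_n\to\infty$, series divergent) follows from the trivial inclusion together with (a).
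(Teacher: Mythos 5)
Your proposal follows essentially the same route as the paper: establish the tail/mass asymptotics for $\mathsf{a}_1^{\prime}$ from the prime number theorem (this is the paper's Lemma \ref{L_PrimeDigitTail}), and then run a Borel--Cantelli dichotomy for the stationary $\psi$-mixing sequence under the Gauss measure, with the monotone reduction for part b). Two small corrections are in order. First, in c) you invoke Baker--Harman--Pintz as giving the \emph{asymptotic} $\pi(x+h)-\pi(x)\sim h/\log x$ for $h\geq x^{0.525}$; this is not what is known at that exponent (the asymptotic is only available down to about $x^{7/12}$). BHP give a lower bound of the correct order, $\pi(x+h)-\pi(x)\gg h/\log x$, which together with the trivial upper bound yields $\mu\bigl(d_n\leq\mathsf{a}_1^{\prime}\leq d_n(1+1/c_n)\bigr)\asymp 1/(c_n d_n\log d_n)$ --- and $\asymp$ is all the dichotomy needs, so the conclusion stands, but the cited input should be weakened accordingly (this is exactly how the paper argues). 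Second, in the divergence half you should be explicit that the quasi-independence furnished by $\psi$-mixing (R\'enyi's Borel--Cantelli lemma) only yields $\mu(\limsup E_n)>0$; the upgrade to measure $1$ uses that $\limsup E_n$ is a tail event for the exact Gauss map, whose tail $\sigma$-field is trivial. Neither point changes the substance of the argument.
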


\begin{remark}
The exponent $0.475$ in c) comes from estimates for the error term in the prime number theorem and might be improved by
future research.
\end{remark}

\begin{example}
\label{Ex_PtwGrowth1}
A straightforward calculation shows that
\[
\lambda(\{\mathsf{a}_{n}^{\prime}>n\log_{2}^{\gamma}n\text{ i.o.}\})
=
\left\{
\begin{array}
[c]{ll}%
1 & \text{if }\gamma\leq1\text{,}\\
0 & \text{otherwise,}%
\end{array}
\right.
\]
and this remains true if $\mathsf{a}_{n}^{\prime}$ is replaced
by $\mathsf{M}_{n}^{\prime}$. We thus find that
\[
\underset{n\rightarrow\infty}{\overline{\lim}}\,\frac{\log\mathsf{a}%
_{n}^{\prime}-\log n}{\log_{3}n}=\,\underset{n\rightarrow\infty}%
{\overline{\lim}}\,\frac{\log\mathsf{M}_{n}^{\prime}-\log n}{\log_{3}%
n}=1\text{ \quad a.e.}%
\]
\end{example}%

As a consequence of Theorem \ref{T_PtwGrowth1} b), observing that the series
$\sum_{n\geq1}1/(b_{n}\log b_{n})$ converges iff $\sum_{n\geq1}1/(\rho
\,b_{n}\log(\rho\,b_{n}))$ converges for every $\rho\in(0,\infty)$, we get

\begin{corollary}\label{cor: Ex_PtwGrowth1}
If $(b_{n})_{n\geq1}$ is non-decreasing, then

 \begin{equation}
\underset{n\rightarrow\infty}{\overline{\lim}}\,\frac{\mathsf{a}_{n}^{\prime}%
}{b_{n}}=\,\underset{n\rightarrow\infty}{\overline{\lim}}\,\frac
{\mathsf{M}_{n}^{\prime}}{b_{n}}
=
\left\{
\begin{array}
[c]{ll}%
\infty \text{ \ a.e.} & \text{if }\sum_{n\geq1}\frac{1}{b_{n}\log b_{n}}=\infty\text{,}\\
0 \text{ \ a.e.} & \text{otherwise.}%
\end{array}
\right.
\end{equation}
In particular,
\begin{equation}
\underset{n\rightarrow\infty}{\overline{\lim}}\,
\frac{\mathsf{a}_{n}^{\prime}}{n\,\log_{2}n}=\,
\underset{n\rightarrow\infty}{\overline{\lim}}\,
\frac{\mathsf{M}_{n}^{\prime}}{n\,\log_{2}n}
=\infty\text{ \ a.e.}%
\end{equation}

\end{corollary}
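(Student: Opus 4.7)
The approach is to derive both equalities and both dichotomies directly from Theorem~\ref{T_PtwGrowth1}~a) and b), exploiting the scale invariance noted just above the corollary: since $(b_n)$ is non-decreasing in $(1,\infty)$, for every fixed $\rho\in(0,\infty)$ the series $\sum_n 1/(b_n\log b_n)$ and $\sum_n 1/(\rho b_n\log(\rho b_n))$ converge or diverge together. When $b_n\to\infty$ this allows me to apply Theorem~\ref{T_PtwGrowth1}~a) to the rescaled threshold $(\rho b_n)$ after discarding the (finitely many) initial indices on which $\rho b_n\leq 1$, which is immaterial for i.o.\ events.

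\emph{Divergent case.} Assume first that $b_n\to\infty$. For each fixed $\rho>0$, Theorem~\ref{T_PtwGrowth1}~a) applied to $(\rho b_n)$ yields $\lambda(\{\mathsf{a}_n'>\rho b_n\text{ i.o.}\})=1$, hence $\limsup_n \mathsf{a}_n'/b_n\geq \rho$ a.e. Intersecting the corresponding full-measure sets over $\rho\in\mathbb{N}$ gives $\limsup_n \mathsf{a}_n'/b_n=\infty$ a.e.; the monotonicity of $(b_n)$ then lets me invoke Theorem~\ref{T_PtwGrowth1}~b) to transfer the conclusion to $\mathsf{M}_n'$. The remaining sub-case where $(b_n)$ is bounded is handled separately: Theorem~\ref{T_PtwGrowth1}~a) with the threshold $b_n=n$ (for which $\sum 1/(n\log n)=\infty$) already gives $\mathsf{a}_n'>n$ i.o.\ a.e., and boundedness of $(b_n)$ then forces $\mathsf{a}_n'/b_n\to\infty$ along that subsequence.

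\emph{Convergent case.} Convergence of $\sum 1/(b_n\log b_n)$ forces $b_n\to\infty$, so the rescaling applies unproblematically. Using it with arbitrarily small $\rho=1/k$, $k\in\mathbb{N}$, Theorem~\ref{T_PtwGrowth1}~a) gives $\lambda(\{\mathsf{a}_n'>b_n/k\text{ i.o.}\})=0$, hence $\limsup_n\mathsf{a}_n'/b_n\leq 1/k$ a.e. for every $k$ and therefore equals $0$ a.e. Part b) of Theorem~\ref{T_PtwGrowth1} again extends the statement to $\mathsf{M}_n'$.

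\emph{Particular case and main obstacle.} For $b_n=n\log_2 n$ it suffices to verify divergence of $\sum_n 1/(b_n\log b_n)$; a direct estimate gives $b_n\log b_n=n\log_2 n\,(\log n+\log_3 n)\sim n\log n\log_2 n$, and the resulting series is the classical borderline-divergent $\sum_n 1/(n\log n\log_2 n)$, so the first alternative of the corollary applies. There is no genuine difficulty in the argument; the one place that requires a moment's thought is the bounded-$(b_n)$ sub-case of the divergent regime, where the rescaling $\rho b_n$ can fail to leave the interval $(1,\infty)$, and which is why the auxiliary argument with threshold $n$ is inserted.
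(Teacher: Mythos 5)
Your proof is correct and takes essentially the same route the paper intends: apply Theorem~\ref{T_PtwGrowth1}~a) to the rescaled thresholds $\rho b_n$ (using the invariance of convergence of $\sum 1/(b_n\log b_n)$ under rescaling once $b_n\to\infty$), intersect full-measure sets over countably many $\rho$, and use Theorem~\ref{T_PtwGrowth1}~b) to pass from $\mathsf{a}_n'$ to $\mathsf{M}_n'$. Your explicit treatment of the bounded-$(b_n)$ sub-case and the verification that $\sum 1/(n\log n\log_2 n)$ diverges simply fill in details the paper leaves implicit.
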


\vspace{0.4cm}%

A convenient condition for the criterion above is provided by

\begin{lemma}
\label{L_SeriesLemma}Let $(b_{n})_{n\geq1}$ be a sequence in $(1,\infty)$ for
which $b_{n}/n$ increases. Then
\[
\underset{n\rightarrow\infty}{\overline{\lim}}\,\frac{n\log_{2}n}{b_{n}%
}>0\text{ \quad implies \quad}\sum_{n\geq1}\frac{1}{b_{n}\log b_{n}%
}=\infty\text{.}%
\]
\end{lemma}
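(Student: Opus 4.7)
The plan is to deduce divergence of $\sum_n 1/(b_n\log b_n)$ from the hypothesis by constructing infinitely many pairwise disjoint blocks of indices, each contributing at least a fixed positive amount to the series. Assume $\limsup_{n\to\infty} (n\log_2 n)/b_n \geq \delta > 0$, so that the set $S := \{n\geq 3 : b_n \leq n\log_2 n/\delta\}$ is infinite. Since $S$ is infinite one can recursively extract a very sparse subsequence $n_1 < n_2 < \cdots$ in $S$ satisfying $\log n_{k+1} \geq (\log n_k)^2$, and hence in particular $\log_2 n_{k+1} \geq 2\log_2 n_k$, for every $k\geq 1$. Set $n_0 := 1$.

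The monotonicity of $b_n/n$ then transfers the information at the anchor points to all earlier indices: for every $n \in (n_{k-1}, n_k]$ one has
$$
b_n \;\leq\; (b_{n_k}/n_k)\,n \;\leq\; (\log_2 n_k)\,n/\delta,
$$
whence $\log b_n \leq \log n + \log_3 n_k + \log(1/\delta)$. I then restrict attention to the sub-block $J_k := (\max\{n_{k-1},\log_2 n_k\}, n_k]$, on which $\log n \geq \log_3 n_k$ and consequently $\log b_n \leq 3\log n$ for all sufficiently large $k$. This yields the pointwise bound
$$
\frac{1}{b_n\log b_n} \;\geq\; \frac{\delta}{3\,(\log_2 n_k)\,n\log n} \qquad (n \in J_k).
$$

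Summing over $J_k$ and using the standard integral comparison $\sum_{a < n \leq b} 1/(n\log n) \sim \log_2 b - \log_2 a$, the block sum becomes
$$
\sum_{n\in J_k}\frac{1}{b_n\log b_n} \;\gtrsim\; \frac{\delta}{\log_2 n_k}\bigl(\log_2 n_k - \log_2(\max\{n_{k-1},\log_2 n_k\})\bigr).
$$
Now the growth condition gives $\log_2 n_{k-1} \leq \log_2 n_k/2$, while trivially $\log_4 n_k = o(\log_2 n_k)$; hence the bracket is asymptotically at least $\log_2 n_k/2$, so each $J_k$ contributes at least some fixed $\eta > 0$. Summation over $k$ yields the claimed divergence.

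The only real care lies in choosing the subsequence of $S$ sparse enough that both cutoffs are negligible, and in resisting the temptation to use the cruder bound $\log b_n \leq 2\log n_k$ in place of $\log b_n \leq 3\log n$ on $J_k$. The former would cost a spurious factor of $\log n_k$ and yield only a majorant of order $\sum_k 1/\log_2 n_k$, which can well converge when the subsequence is extremely sparse; keeping the genuine $n$-dependence in $\log b_n$ by restricting to $J_k$ is the critical step that makes the argument go through.
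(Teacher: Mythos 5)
Your proof is correct, and it rests on the same core mechanism as the paper's: use the monotonicity of $b_n/n$ to propagate the bound $b_n \le (\log_2 n_k/\delta)\,n$ from the anchor index $n_k$ downward, and impose a lower cutoff on the block so that $\log b_n$ is comparable to $\log n$ rather than to $\log n_k$. You correctly identify this cutoff as the critical step; without it the naive bound $\log b_n \lesssim \log n_k$ would leave you with a sum of order $\sum_k 1/\log_2 n_k$, which can converge.

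Where you diverge from the paper is in how the local estimate is turned into divergence of the series. You pass to a sparse subsequence ($\log n_{k+1} \ge (\log n_k)^2$), form disjoint blocks $J_k$, and show each contributes a fixed $\eta>0$; summing over $k$ gives the result. The paper instead works with the full set $M=\{n:(n\log_2 n)/b_n\ge\varepsilon\}$, chooses the cutoff $c(n)=\exp(\sqrt{\log n})$ so that $\int_{c(n)}^{n}\!\frac{dx}{x\log x\log_2 x}=\log 2$ is a fixed constant, and thereby bounds the tail $\sum_{k>c(n)}1/(b_k\log b_k)$ from below by $C(\varepsilon)\log 2$ for every $n\in M$. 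Since $c(n)\to\infty$ along $M$, the tails of the series stay bounded away from zero, forcing divergence. The paper's route is somewhat tidier since it avoids constructing a subsequence and proving disjointness of blocks, and the choice of $c(n)$ makes the integral come out to a clean constant; your route trades that for a more hands-on decomposition in which the block length in the $\log_2$-scale grows with $k$ to compensate for pulling out the $1/\log_2 n_k$ factor. Both arguments are sound, and the comparison is instructive: the paper spends its cutoff budget on making the $\log_2 n/\log_2 k$ ratio bounded, while you spend yours on making $\log_3 n_k$ negligible relative to $\log n$ and recover the lost factor from the length of the block.
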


As in the case of the full digit sequence $(\mathsf{a}_{n})_{n\geq1}$, the
peculiar properties of $(\mathsf{a}_{n}^{\prime})_{n\geq1}$ are due to the
fact that these functions are not integrable. A general fact for
non-integrable non-negative stationary sequences is the non-existence of a
non-trivial strong law of large numbers, made precise in a), c) and d) of the next
result, where c) is in the spirit of \cite{Phi88}. However, it is sometimes possible to recover a meaningful limit by
\emph{trimming}, i.e.\ by removing maximal terms. In the case of $(\mathsf{a}%
_{n})_{n\geq1}$, this was first pointed out in \cite{DV}. Assertion b) below
gives the proper version for the $(\mathsf{a}_{n}^{\prime})_{n\geq1}$.

\begin{theorem}
[\textbf{Strong laws of large numbers}]\label{T_SLLN}
~

\noindent\textbf{a)} The prime digits satisfy
\begin{equation*}
\lim_{n\rightarrow\infty}\,\frac{1}{n}\sum_{k=1}^{n}\mathsf{a}_{k}^{\prime
}=\infty\text{ \quad a.e.}\label{Eq_ETHMsimple}%
\end{equation*}

\vspace{0.4cm}
\noindent
\textbf{b)} Subtracting $\mathsf{M}_{n}^{\prime}$, we obtain a trimmed strong law,
\begin{equation}
\lim_{n\rightarrow\infty}\,\frac{\log2}{n\,\log_{2}n}\left(  \sum_{k=1}%
^{n}\mathsf{a}_{k}^{\prime}-\mathsf{M}_{n}^{\prime}\right)  =1\text{ \quad
a.e.}\label{Eq_TrimmedSLLN}%
\end{equation}%

\vspace{0.4cm}
\noindent
\textbf{c)}
For sequences $(b_{n})_{n\geq1}$ in $(1,\infty)$ satisfying
$b_{n}/n\nearrow\infty$ as $n\rightarrow\infty$,%
\begin{equation}
\underset{n\rightarrow\infty}{\overline{\lim}}\,\frac{1}{b_{n}}\sum_{k=1}%
^{n}\mathsf{a}_{k}^{\prime}=\infty\text{ \ a.e. \quad iff \quad}\sum_{n\geq
1}\frac{1}{b_{n}\log b_{n}}=\infty\text{,}\label{Eq_JonsETHM}%
\end{equation}
while otherwise
\begin{equation}\label{Eq_Dicho}
\lim_{n\rightarrow\infty}\,\frac{1}{b_{n}}\sum_{k=1}^{n}\mathsf{a}_{k}%
^{\prime}=0\text{ \ a.e.}%
\end{equation}%

\noindent\textbf{d) }
 But, defining $\overline{n}(j):=e^{j\log^{2}j}$, $j\geq1$, and
$d_{n}^{\prime}:=\overline{n}(j)\,\log_{2}\overline{n}(j)/\log2$ for
$n\in(\overline{n}(j-1),\overline{n}(j)]$ gives a normalizing sequence for
which
\begin{equation}
\underset{n\rightarrow\infty}{\overline{\lim}}\,\frac{1}{d_{n}^{\prime}}%
\sum_{k=1}^{n}\mathsf{a}_{k}^{\prime}=1\text{ \ a.e.}\label{Eq_SLLNfunnynorm}%
\end{equation}
\end{theorem}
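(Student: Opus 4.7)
The plan is to combine part b) with a Borel--Cantelli argument along the subsequence $(\overline{n}(j))$. Set $T_n:=\sum_{k=1}^{n}\mathsf{a}_{k}^{\prime}$. Since $d_n'$ is constant on each block $(\overline{n}(j-1),\overline{n}(j)]$ while $T_n$ is non-decreasing, the ratio $T_n/d_n'$ attains its maximum over each block at the right endpoint, so
\[
\limsup_{n\to\infty}\frac{T_n}{d_n'} \;=\; \limsup_{j\to\infty}\frac{T_{\overline{n}(j)}}{d_{\overline{n}(j)}'}.
\]
Combining this with part b), which gives $T_n - \mathsf{M}_n' = (1+o(1))\,n\log_2 n/\log 2$ a.e., and the identity $d_{\overline{n}(j)}' = \overline{n}(j)\log_2\overline{n}(j)/\log 2$, we obtain
\[
\frac{T_{\overline{n}(j)}}{d_{\overline{n}(j)}'} \;=\; \frac{\mathsf{M}_{\overline{n}(j)}'}{d_{\overline{n}(j)}'} + 1 + o(1) \quad\text{a.e.}
\]
Thus the claim reduces to showing $\mathsf{M}_{\overline{n}(j)}'/d_{\overline{n}(j)}' \to 0$ a.e.\ as $j\to\infty$.

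For this vanishing, I would fix $\varepsilon>0$ and apply Theorem \ref{T_PtwGrowth1}a) with $b_n := \varepsilon d_n'$. Using $\log\overline{n}(j)=j\log^2 j$ (hence $\log_2\overline{n}(j)\sim\log j$) and the fact that $\overline{n}(j)/\overline{n}(j-1)=\exp(\log^2 j + o(\log^2 j))\to\infty$, so the $j$-th block has cardinality $\sim\overline{n}(j)$, the contribution of the block to $\sum_n 1/(b_n\log b_n)$ is of order
\[
\frac{\overline{n}(j)}{\varepsilon\,\overline{n}(j)\,\log j\cdot j\log^2 j/\log 2} \;\asymp\; \frac{1}{\varepsilon\, j\log^3 j},
\]
and $\sum_{j\geq 2}1/(j\log^3 j)$ converges. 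Theorem \ref{T_PtwGrowth1}a) therefore yields $\mathsf{a}_n' \leq \varepsilon d_n'$ eventually, a.e. A direct comparison then gives $\limsup_j \mathsf{M}_{\overline{n}(j)}'/d_{\overline{n}(j)}' \leq \varepsilon$ a.e.: for a.e.\ $x$, choose $K=K(x,\varepsilon)$ with $\mathsf{a}_k'(x)\leq\varepsilon d_k'$ for $k>K$; then for $j$ with $\overline{n}(j)>K$,
\[
\mathsf{M}_{\overline{n}(j)}'(x) \;\leq\; \max\{\mathsf{M}_K'(x),\; \varepsilon d_{\overline{n}(j)}'\},
\]
and dividing by $d_{\overline{n}(j)}'\to\infty$ gives the bound. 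Running $\varepsilon$ through $1/m$, $m\in\mathbb{N}$, yields $\mathsf{M}_{\overline{n}(j)}'/d_{\overline{n}(j)}' \to 0$ a.e., completing the argument.

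The main subtlety is the delicate calibration of $\overline{n}(j)$. By part c), no sequence $b_n$ with $b_n/n\nearrow\infty$ produces a finite, nonzero limsup of $T_n/b_n$; the sequence $d_n'$ escapes this dichotomy precisely because $d_n'/n$ is non-monotone (in fact strictly decreasing) within each block. The growth rate $\log\overline{n}(j)=j\log^2 j$ is tuned so that the per-block contribution $1/(j\log^3 j)$ is summable (needed to pin down the maxima), while still keeping $d_n'$ at the block endpoints equal to the main term $n\log_2 n/\log 2$ coming from b). A visibly faster choice would spoil summability (and hence let $\mathsf{M}'_{\overline{n}(j)}$ dominate along a subsequence), whereas a visibly slower choice would make $T_{\overline{n}(j)}/d_{\overline{n}(j)}'$ diverge already from the $n\log_2 n/\log 2$ term alone.
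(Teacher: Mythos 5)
Your argument for d) is correct and rests on the same essential ingredients as the paper's proof: the trimmed strong law \eqref{Eq_TrimmedSLLN}, the vanishing of $\mathsf{M}_n'/d_n'$, and the calibration $d_{\overline{n}(j)}' = \overline{n}(j)\log_2\overline{n}(j)/\log 2$ at the block endpoints. The organization, however, is a genuine simplification. The paper first derives $\limsup_n (1/d_n')\sum_{k\leq n}\mathsf{a}_k' \leq 1$ from the inequality $n\log_2 n/(\log 2\cdot d_n')\leq 1$ via the decomposition \eqref{Eq_Rudi}, and then separately specializes \eqref{Eq_Rudi} to $n=\overline{n}(j)$ to get equality along the subsequence; your observation that $T_n/d_n'$ is nondecreasing on each block (since $T_n$ is nondecreasing and $d_n'$ is locally constant), so the limsup is attained along $\overline{n}(j)$, compresses these two steps into one. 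You also establish $\mathsf{M}_{\overline{n}(j)}'/d_{\overline{n}(j)}'\to 0$ more transparently by applying Theorem \ref{T_PtwGrowth1} a) directly to $b_n := \varepsilon d_n'$ and computing the per-block contribution $\asymp 1/(\varepsilon\,j\log^3 j)$, whereas the paper introduces the auxiliary nondecreasing $c_n' := d_n'/\log_2(10j)$ with $c_n'=o(d_n')$ and $\sum 1/(c_n'\log c_n')<\infty$ and then passes to maxima; since your $b_n$ is also eventually nondecreasing, you could have cited \eqref{Eq_DigitvsMax} instead of redoing the pass from digits to maxima by hand, though your self-contained version is fine. Two small points to tidy: replace $b_n$ by $\max(\varepsilon d_n',2)$ (or drop finitely many indices) so the hypothesis $b_n\in(1,\infty)$ of Theorem \ref{T_PtwGrowth1} a) is literally met, and state explicitly that $\max_{k\leq\overline{n}(j)}d_k'=d_{\overline{n}(j)}'$ because $d_n'$ is a nondecreasing step function, since the bound $\mathsf{M}_{\overline{n}(j)}'(x)\leq\max\{\mathsf{M}_K'(x),\varepsilon d_{\overline{n}(j)}'\}$ depends on it. Your closing remarks on why $d_n'/n$ evades the dichotomy of c) and on the tuning of $\overline{n}(j)$ are accurate and add useful intuition that the paper leaves implicit. (Note: you addressed only part d); parts a)--c) of the theorem are not covered by the proposal.)
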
%

\vspace{0.4cm}%

The trimmed law from b) shows that the bad pointwise behaviour described in c)
is due to a few exceptionally large individual terms $\mathsf{a}_{n}^{\prime}$
which, necessarily, have to be of the order of the preceding partial sum
$\sum_{k=1}^{n-1}\mathsf{a}_{k}^{\prime}$. In fact, almost surely, the partial
sum will infinitely often be of strictly smaller order than the following
term, see statement a) below.
We can also ask whether, or to what extent, the
terms from the thinner sequence $(\mathsf{a}_{n}^{\prime})_{n\geq1}$ come
close to the partial sums $(\sum_{k=0}^{n-1}\mathsf{a}_{k})_{n\geq1}$ of the
unrestricted one. The answer is given by the dichotomy rule in statement b) of
the next result.

We shall tacitly interpret real sequences $(g_{n})_{n\geq0}$ as
functions on $\mathbb{R}_{+}$ via $t\longmapsto g_{[t]}$, and write $g(t)\sim
h(t)$ as $t\rightarrow\infty$ if $g(t)/h(t)\rightarrow1$.
Moreover, $g(t)\asymp h(t)$ means
$0<  {\underline{\lim}}_{t\rightarrow\infty}\, g(t)/h(t) \leq
{\overline{\lim}}_{t\rightarrow\infty}\,  g(t)/h(t)  < \infty$.

\begin{theorem}
[\textbf{Relative size of digits and partial sums}]\label{T_PDversusD1}%
~

\noindent\textbf{a)} We have
\begin{equation}
\underset{n\rightarrow\infty}{\overline{\lim}}\,\frac{\mathsf{a}_{n}^{\prime}%
}{\sum_{k=1}^{n-1}\mathsf{a}_{k}^{\prime}}=\,\infty\text{ \ a.e.}%
\label{Eq_DirektAusATZ}%
\end{equation}
Generally, for functions $g:[0,\infty)\rightarrow(3,\infty)$ fulfilling
$g(\eta(t))\asymp g(t)$ if $\eta(t) \sim t$ as $t\rightarrow\infty$,
we have%
\begin{equation}
\underset{n\rightarrow\infty}{\overline{\lim}}\,\frac{g(\mathsf{a}_{n}%
^{\prime})}{\sum_{k=1}^{n-1}\mathsf{a}_{k}^{\prime}}=\infty\text{ \ a.e. \quad
iff \quad}\int_{c}^{\infty}\frac{g(y)}{\log_{2}g(y)}\frac{dy}{y^{2}\log
y}=\infty\text{,}\label{Eq_ATZforStarStar}%
\end{equation}
while otherwise%
\begin{equation*}
\lim_{n\rightarrow\infty}\,\frac{g(\mathsf{a}_{n}^{\prime})}{\sum_{k=1}%
^{n-1}\mathsf{a}_{k}^{\prime}}=0\text{ \ a.e.}%
\end{equation*}%
\vspace{0.4cm}
\noindent
\textbf{b)} In contrast, comparing to the unrestricted digit sum $\sum
_{k=1}^{n-1}\mathsf{a}_{k}$, one has
\begin{equation*}
\lim_{n\rightarrow\infty}\,\frac{\mathsf{a}_{n}^{\prime}}{\sum_{k=1}%
^{n-1}\mathsf{a}_{k}}=0\text{ \ a.e.}
\end{equation*}
Generally, for functions $g:[0,\infty)\rightarrow(3,\infty)$ fulfilling
$g(\eta(t))\asymp g(t)$ if $\eta(t) \sim t$ as $t\rightarrow\infty$,
we have
\begin{equation*}
\underset{n\rightarrow\infty}{\overline{\lim}}\,\frac{g(\mathsf{a}_{n}%
^{\prime})}{\sum_{k=1}^{n-1}\mathsf{a}_{k}}=\infty\text{ \ a.e. \quad iff
\quad}\int_{c}^{\infty}\frac{g(y)}{\log g(y)}\frac{dy}{y^2\log y}%
=\infty\text{,}%
\end{equation*}
while otherwise
\begin{equation*}
\lim_{n\rightarrow\infty}\,\frac{g(\mathsf{a}_{n}^{\prime})}{\sum_{k=0}%
^{n-1}\mathsf{a}_{k}}=0\text{ \ a.e.}%
\end{equation*}%
\vspace{0.4cm}
\noindent
\textbf{c)} Turning to a comparison of partial sums, we find that
\begin{equation*}
\lim_{n\rightarrow\infty}\,\frac{\sum_{k=1}^{n}\mathsf{a}_{k}^{\prime}}%
{\sum_{k=1}^{n}\mathsf{a}_{k}}=0\text{ \ a.e.}
\end{equation*}
\end{theorem}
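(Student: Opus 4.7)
The plan is to combine two ingredients: (i) the tail asymptotics of a single prime digit under the Gauss measure,
\[
\mu_G(\mathsf{a}_n^{\prime}>t)=\tfrac{1}{\log 2}\!\!\sum_{\substack{\mathrm{p}\in\mathbb{P}\\\mathrm{p}>t}}\log\!\Bigl(1+\tfrac{1}{\mathrm{p}(\mathrm{p}+2)}\Bigr)\sim\frac{1}{t\log t\,\log 2}\quad(t\to\infty),
\]
obtained from the Gauss--Kuzmin formula together with the Prime Number Theorem; and (ii) the trimmed strong laws -- Theorem~\ref{T_SLLN}~b) for the prime digits and the classical Diamond--Vaaler trimmed SLLN $\sum_{k=1}^{n}\mathsf{a}_k-M_n\sim (n\log n)/\log 2$ a.e.\ for the unrestricted digits. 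All three parts then reduce to Borel--Cantelli type dichotomies with different normalising denominators.

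For part a), the general statement is a Borel--Cantelli dichotomy for the events $A_n:=\{g(\mathsf{a}_n^{\prime})>\varepsilon\sum_{k=1}^{n-1}\mathsf{a}_k^{\prime}\}$. By the trimmed SLLN the denominator equals $M_{n-1}^{\prime}+(1+o(1))n\log_{2}n/\log 2$ a.e., hence is, up to multiplicative constants, of order $M_{n-1}^{\prime}\vee n\log_{2}n$. Using the $\psi$-mixing of the Gauss shift, conditional tails of $\mathsf{a}_n^{\prime}$ match unconditional ones up to bounded factors, so $\mu_G(A_n\mid\mathcal{F}_{n-1})\asymp 1/(g^{-1}(n\log_{2}n)\log g^{-1}(n\log_{2}n))$ on the dominant event where $M_{n-1}^{\prime}\lesssim n\log_{2}n$, while the excursions of $M_{n-1}^{\prime}$ are handled separately via Theorem~\ref{T_PtwGrowth1}~a). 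Converting the Borel--Cantelli sum to an integral through the substitutions $t=n\log_{2}n$ and then $t=g(y)$, and invoking the regular-variation hypothesis $g(\eta(t))\asymp g(t)$ for $\eta(t)\sim t$ (which yields $g'(y)\asymp g(y)/y$), produces the criterion $\int^{\infty}g(y)\,dy/(y^{2}\log y\,\log_{2}g(y))=\infty$. Equation~\eqref{Eq_DirektAusATZ} is the special case $g(y)=y$, where $\int^{\infty}dy/(y\log y\log_{3}y)=\infty$. Part b) is parallel, with the denominator replaced by the unrestricted sum, which the Diamond--Vaaler trimmed SLLN identifies with $M_{n-1}+(1+o(1))n\log n/\log 2$; the normalising order is now $n\log n$ instead of $n\log_{2}n$, which precisely substitutes $\log g(y)$ for $\log_{2}g(y)$ in the integrand. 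The pointwise statement $\mathsf{a}_n^{\prime}/\sum_{k=1}^{n-1}\mathsf{a}_k\to 0$ a.e.\ is the specialisation $g(y)=y$, for which $\int dy/(y\log^{2}y)<\infty$, so the ``otherwise'' clause applies.

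Part c) I would prove directly. Applying the convergence half of Theorem~\ref{T_PtwGrowth1}~a) with $b_n=n(\log_{2}n)^{2}$ (the sum $\sum 1/(n(\log_{2}n)^{2}\log n)$ is convergent by the substitution $u=\log_{2}n$) shows $\mathsf{a}_n^{\prime}\le n(\log_{2}n)^{2}$ a.e.\ eventually, hence $\mathsf{M}_n^{\prime}=O(n(\log_{2}n)^{2})$ a.e.; combined with Theorem~\ref{T_SLLN}~b) this gives $\sum_{k=1}^{n}\mathsf{a}_k^{\prime}=O(n(\log_{2}n)^{2})$ a.e. Meanwhile the Diamond--Vaaler trimmed SLLN gives $\sum_{k=1}^{n}\mathsf{a}_k\ge c\,n\log n$ a.e.\ eventually, and the ratio is $O((\log_{2}n)^{2}/\log n)\to 0$.

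The main difficulty is the Borel--Cantelli step in parts a) and b): since the normalising partial sums are random and correlated with the past of the sequence, the events $A_n$ are not independent and the standard Borel--Cantelli lemma does not directly apply. The resolution is either to exploit the strong $\psi$-mixing of the Gauss map, which essentially decouples $\mathsf{a}_n^{\prime}$ from $\mathcal{F}_{n-1}$ at the level of conditional tail estimates, or to invoke ready-made dichotomy theorems in the spirit of Aaronson, Thaler and Zweim\"uller that are tailored exactly to stationary non-integrable positive sequences of this type. A secondary technical point is verifying that the condition $g(\eta(t))\asymp g(t)$ along $\eta(t)\sim t$ suffices to turn the discrete Borel--Cantelli sum into the claimed integral; this is precisely a regular-variation argument and should follow standard lines.
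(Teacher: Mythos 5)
Your part c) is correct and is essentially the paper's argument: combine the trimmed law \eqref{Eq_TrimmedSLLN} with an a.e.\ eventual upper bound on $\mathsf{M}_n^{\prime}$ obtained from the convergence half of Theorem \ref{T_PtwGrowth1} a), b), and compare with the Diamond--Vaaler law for the denominator.

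For a) and b), however, your primary route (the hand-rolled Borel--Cantelli dichotomy) has a genuine gap in the divergence half. Replacing the random denominator $\sum_{k=1}^{n-1}\mathsf{a}_k^{\prime}$ by its ``typical'' order $n\log_2 n$ and applying a R\'enyi-type Borel--Cantelli argument to the surrogate events $\{g(\mathsf{a}_n^{\prime})>\varepsilon\, n\log_2 n\}$ only shows that these surrogate events occur infinitely often. The actual denominator equals $\mathsf{M}_{n-1}^{\prime}+(1+o(1))\,n\log_2 n/\log 2$, and by Corollary \ref{cor: Ex_PtwGrowth1} one has $\overline{\lim}_n\,\mathsf{M}_n^{\prime}/(n\log_2 n)=\infty$ a.e.; worse, a single huge digit $\mathsf{a}_j^{\prime}=V$ forces $\sum_{k<n}\mathsf{a}_k^{\prime}\geq V\gg n\log_2 n$ for the entire long block of indices $n\lesssim V/\log_2 V$. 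So you must show that the rare times at which $g(\mathsf{a}_n^{\prime})$ is large fall infinitely often outside these long bad blocks; that is a joint statement about the numerator and the whole past, and neither the $\psi$-mixing of a single coordinate against $\mathcal{F}_{n-1}$ nor an appeal to Theorem \ref{T_PtwGrowth1} a) ``handled separately'' delivers it. This is exactly the content of the second option you mention only as a fallback, and it is what the paper does: it invokes Theorem 4 of \cite{ATZ} for the CF-mixing partition $\xi$, with $(\varphi,\psi)=(g\circ\mathsf{a}^{\prime},\mathsf{a}^{\prime})$ in a) and $(g\circ\mathsf{a}^{\prime},\mathsf{a})$ in b); the criterion there is $\int_I a\circ g\circ\mathsf{a}^{\prime}\,d\mu_{\mathfrak{G}}=\infty$ with $a$ the return sequence attached to $\psi$ (namely $a^{\prime}(t)\sim\log 2\cdot t/\log_2 t$, resp.\ $a(t)\sim\log 2\cdot t/\log t$), and the PNT converts this into the stated integrals. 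Two smaller points: your sum-to-integral conversion uses $g^{-1}$ and $g^{\prime}(y)\asymp g(y)/y$, neither of which is available under the stated hypothesis on $g$ (which is weaker than regular variation, and $g$ need not be monotone) --- the ATZ criterion avoids inverting $g$ altogether; and for $g(y)=y$ the integrand is $1/(y\log y\,\log_2 y)$, not $1/(y\log y\,\log_3 y)$ (both diverge, so the conclusion stands).
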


\vspace{0.4cm}

\begin{remark}
A broad class of functions which satisfy
$g(\eta(t))\asymp g(t)$ if $\eta(t) \sim t$ as $t\rightarrow\infty$,
are the regularly varying functions.
 Recall that a measurable function $g:(L,\infty)\rightarrow(0,\infty)$ is
\emph{regularly varying of index} $\rho\in\mathbb{R}$ at infinity, written
$g\in\mathcal{R}_{\rho}$, if $g(ct)/g(t)\rightarrow c^{\rho}$ as
$t\rightarrow\infty$ for all $c>0$ (see Chapter 1 of \cite{BGT} for more
information).
\end{remark}

Whether or not the integrals diverge can easily be checked for many specific
$g$'s:

\begin{example}
~

\noindent\textbf{a)} Taking $g(t):=t\,\log^{\rho}t$, $\rho\in\mathbb{R}$, part b)
gives
\[
\underset{n\rightarrow\infty}{\overline{\lim}}\,\frac{\mathsf{a}_{n}^{\prime
}\log\mathsf{a}_{n}^{\prime}}{\sum_{k=1}^{n-1}\mathsf{a}_{k}}=\infty\text{
\ a.e. for }\rho>1\quad\text{ while }\quad\lim_{n\rightarrow\infty}\,\frac{\mathsf{a}%
_{n}^{\prime}\log^{\rho}\mathsf{a}_{n}^{\prime}}{\sum_{k=1}^{n-1}%
\mathsf{a}_{k}}=0\text{ \ a.e.\ for }\rho\leq 1\text{.}%
\]
\vspace{0.4cm}
\textbf{b)} In case $g(t):=t\,\log t/\log_{2}^{\gamma}t$, $\gamma\in
\mathbb{R}$, we find for $\gamma\leq 1$
\[
\underset{n\rightarrow\infty}{\overline{\lim}}\,\frac{\mathsf{a}_{n}^{\prime
}\log\mathsf{a}_{n}^{\prime}/\log_{2}\mathsf{a}_{n}^{\prime}}{\sum_{k=1}%
^{n-1}\mathsf{a}_{k}}=\infty\text{ \ a.e.}%
\]
while, for $\gamma>1$,
\[
\lim_{n\rightarrow\infty}\,\frac{\mathsf{a}_{n}^{\prime}\log\mathsf{a}%
_{n}^{\prime}/\log_{2}^{\gamma}\mathsf{a}_{n}^{\prime}}{\sum_{k=1}%
^{n-1}\mathsf{a}_{k}}=0\text{ \ a.e.}%
\]
\end{example}%

On the other hand, if we look at primes to some power $\gamma$ we obtain - as a counterpart to Theorem \ref{T_SLLN} b) - the following result:

\begin{theorem}\label{thm: exponents of a}
~

\noindent\textbf{a)} For $\gamma<1$ there exists $K_\gamma>0$ such that
\begin{equation*}
\lim_{n\rightarrow\infty}\,\frac{\sum_{k=1}^{n}\left(\mathsf{a}_{k}^{\prime
}\right)^{\gamma}}{n}=K_\gamma <\infty\text{ \quad a.e.}
\end{equation*}

\vspace{0.4cm}%
%

\noindent\textbf{b)}
Let $\sigma:=\sigma_{(n,x)}\in\mathcal{S}_n$ be a pointwise permutation, i.e.\
$\sigma: I\times \{1,\ldots, n\} \to \{1,\ldots, n\}$,
such that
$\mathsf{a}_{\sigma(1)}^{\prime}\geq\ldots \geq \mathsf{a}_{\sigma(n)}^{\prime}$
and $\mathsf{S}_n^k:=\sum_{j=k+1}^{n}\mathsf{a}_{\sigma(j)}^{\prime}$.
If $\gamma>1$, then
for all $(b_n)\in\mathbb{N}^{\mathbb{N}}$ fulfilling
$b_n=o(n^{1-\epsilon})$ for some $\epsilon>0$ and
\begin{equation}
 \lim_{n\to\infty}\frac{b_n}{\log\log n}=\infty\label{eq: cond on bn}
\end{equation}
we have
\begin{equation}\label{eq: strong law gamma>1}
\lim_{n\rightarrow\infty}\,\frac{\mathsf{S}_n^{b_n}}{d_n}= 1\text{ \quad a.e.}
\end{equation}
where
\begin{equation}\label{eq: d_n}
 d_n\sim \frac{1}{(\gamma-1)(\log 2)^{\gamma}}\cdot \frac{n^{\gamma}b_n^{1-\gamma}}{(\log n)^{\gamma}}.
\end{equation}
\end{theorem}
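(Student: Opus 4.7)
For part \textbf{a)}, the plan is to invoke Birkhoff's ergodic theorem for the Gauss map $T$ on $(I,\mathcal{B}_I,\mu)$, where $\mu$ is the (Lebesgue-equivalent) Gauss measure, applied to the function $(\mathsf{a}_1')^\gamma$. The only nontrivial thing to check is integrability: using $\mu(\mathsf{a}_1=k)\asymp k^{-2}$ together with the prime number theorem,
\[
\int(\mathsf{a}_1')^\gamma\,d\mu \;\asymp\; \sum_{p\in\mathbb{P}} p^{\gamma-2} \;\asymp\; \int^{\infty}\frac{y^{\gamma-2}}{\log y}\,dy,
\]
which is finite precisely for $\gamma<1$. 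Ergodicity of $(T,\mu)$ and the equivalence of $\mu$ with $\lambda$ then yield the stated a.e.\ limit with $K_\gamma=\int(\mathsf{a}_1')^\gamma\,d\mu$.

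For part \textbf{b)}, the crucial preliminary is the tail asymptotic
\[
\mu\bigl((\mathsf{a}_1')^\gamma>t\bigr) \;\sim\; \frac{\gamma}{\log 2}\cdot\frac{1}{t^{1/\gamma}\log t}\qquad(t\to\infty),
\]
which follows from $\mu(\mathsf{a}_1=p)\sim 1/((\log 2)p^{2})$ by a PNT-based Abel summation of the same kind that underlies Theorem \ref{T_PtwGrowth1} c). Since $\gamma>1$, the index $\alpha:=1/\gamma$ lies in $(0,1)$, so $((\mathsf{a}_k')^\gamma)_{k\ge 1}$ is a stationary process whose marginals belong to the domain of attraction of a positive $\alpha$-stable law. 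I would then invoke a lightly trimmed strong law of Haeusler--Mason/Mori type, transported from the iid setting to the exponentially $\psi$-mixing Gauss system: under the growth conditions $b_n\to\infty$, $b_n=o(n)$, and (\ref{eq: cond on bn}), the trimmed sum satisfies $\mathsf{S}_n^{b_n}/(n\,\mu_{u_n})\to 1$ almost surely, where the truncation level $u_n$ is determined by $n\,\mu((\mathsf{a}_1')^\gamma>u_n)=b_n$ and $\mu_u:=\int(\mathsf{a}_1')^\gamma\mathbbm{1}_{\{(\mathsf{a}_1')^\gamma\le u\}}\,d\mu$.

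It then remains to evaluate $n\,\mu_{u_n}$ by Karamata's theorem. Setting $v_n:=u_n^{1/\gamma}$, the defining relation reduces to $v_n\log v_n\sim n/(b_n\log 2)$, hence $v_n\sim n/(b_n\log 2\cdot\log(n/b_n))$ with $\log v_n\sim\log(n/b_n)$; the hypothesis $b_n=o(n^{1-\epsilon})$ turns this into a genuine $\log n$. Karamata applied to the regularly varying tail gives
\[
\mu_{u_n} \;\sim\; \frac{\gamma}{(\gamma-1)\log 2}\cdot\frac{u_n^{1-1/\gamma}}{\log u_n} \;=\; \frac{v_n^{\gamma-1}}{(\gamma-1)\log 2\cdot\log v_n},
\]
and substituting back produces $d_n\sim n\,\mu_{u_n}\sim \frac{1}{(\gamma-1)(\log 2)^\gamma}\cdot\frac{n^\gamma b_n^{1-\gamma}}{(\log n)^\gamma}$, as claimed. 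The principal difficulty will lie in the middle step: locating, or else adapting by hand, a lightly trimmed SLLN in a form that applies directly to the Gauss system with these stable-like tails; this is where the sharp growth condition $b_n/\log_2 n\to\infty$ enters, reflecting Kesten's theorem on the fluctuations of the upper order statistics---below this threshold the normalized trimmed sum ceases to have an almost-sure deterministic limit.
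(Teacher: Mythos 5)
Your proof of part a) matches the paper exactly (ergodic theorem plus an integrability check), and for part b) you have correctly identified all the supporting ingredients: the tail asymptotic $\mu((\mathsf{a}')^\gamma>t)\sim\frac{\gamma}{\log 2}\cdot\frac{1}{t^{1/\gamma}\log t}$, the reduction to a trimmed strong law for the Gauss system, and the Karamata computation that turns the truncated mean at level $u_n$ (defined by $n\,\mu((\mathsf{a}')^\gamma>u_n)=b_n$) into the stated formula for $d_n$. That computation is correct and is equivalent to the paper's route via the de Bruijn conjugate.

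The genuine gap is exactly the one you flag yourself: the central step is an \emph{intermediately} trimmed strong law (not ``lightly trimmed''---that term refers to removing a bounded number of maxima, which the remark after the theorem explicitly contrasts with the present case $b_n\to\infty$) for observables with regularly varying tails over a mixing dynamical system. You leave this as ``locate or adapt by hand,'' which is the entire content of the proof. The paper supplies it by citing Kesseb\"ohmer--Schindler, \emph{Strong laws of large numbers for intermediately trimmed Birkhoff sums of observables with infinite mean} (Theorem 1.7 and its erratum), and then verifying the two hypotheses of that theorem: Property $\mathfrak{C}$ (a spectral-gap/transfer-operator condition, standard for the Gauss map on $BV$) and Property $\mathfrak{D}$ (controlled growth of $\|\mathsf{a}\cdot\mathbbm{1}_{\{\mathsf{a}\le\ell\}}\|_{BV}$ and $\|\mathbbm{1}_{\{\mathsf{a}\le\ell\}}\|_{BV}$). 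Without identifying this result and checking those hypotheses, the argument does not close; transporting an i.i.d.\ Haeusler--Mason/Mori statement to the $\psi$-mixing Gauss system is not automatic and is precisely what [KS1] accomplishes. A minor secondary slip: the sharp threshold $b_n/\log\log n\to\infty$ is tied in the paper's remark to Haeusler's a.s.\ result, not to Kesten's theorem, which concerns distributional (not almost-sure) equivalence of trimmed and untrimmed sums.
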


\begin{remark}
 It is not proven that a trimming rate slower than the one given in \eqref{eq: cond on bn} is possible. However, by \cite{haeusler} one can deduce that for i.i.d.\ random variables with the same distribution function and $b_n\asymp \log\log n$ a strong law of large numbers as in \eqref{eq: strong law gamma>1} is no longer possible.
\end{remark}

However, if we only ask for convergence in probability, the picture looks much simpler and we refer the reader to Theorem \ref{thm: exponents of a in prob} in the next section.

\section{Main Results - Distributional matters}

The second set of results we present focuses on the distributions of (various
functions of) the digits $\mathsf{a}_{n}^{\prime}$. If $(M,d)$ is a separable metric
space with Borel $\sigma$-field $\mathcal{B}_{M}$, a sequence $(\nu
_{n})_{n\geq1}$ of probability measures on $(M,\mathcal{B}_{M})$
\emph{converges weakly} to the probability measure $\nu$ on $(M,\mathcal{B}%
_{M})$, written $\nu_{n}\Longrightarrow\nu$, if the integrals of bounded
continuous function $\psi:M\rightarrow\mathbb{R}$ converge, i.e.\ $\int
\psi\,d\nu_{n}\longrightarrow\int\psi\,d\nu$ as $n\rightarrow\infty$. If
$R_{n}:I\rightarrow M$, $n\geq 1$, Borel measurable functions and
$\nu$ a Borel probability on $M$ (or $R$ another \emph{random element} of $M$,
not necessarily defined on $I$, with distribution $\nu$) then $(R_{n}%
)_{n\geq1}$ \emph{converges in distribution} to $\nu$ (or to $R$) \emph{under
the probability measure} $P$ on $\mathcal{B}_{I}$, if the distributions
$P\circ R_{n}^{-1}$ of the $R_{n}$ w.r.t.\ $P$ converge weakly to $\nu$.
Explicitly specifying the underlying measure, we denote this by
\[
R_{n}\overset{P}{\Longrightarrow}\nu\text{ \quad or \quad}R_{n}\overset
{P}{\Longrightarrow}R\text{.}%
\]
For sequences $(R_{n})$ defined on an ergodic dynamical system, it is often
the case that a distributional limit theorem $R_{n}\overset{P}{\Longrightarrow
}R$ automatically carries over to a large collection of other probability
measures: \emph{strong distributional convergence}, written
\[
R_{n}\overset{\mathcal{L}(\lambda)}{\Longrightarrow}\nu\text{ \quad or \quad
}R_{n}\overset{\mathcal{L}(\lambda)}{\Longrightarrow}R\text{,}%
\]
means that $R_{n}\overset{P}{\Longrightarrow}R$ for all probability measures
$P\ll\lambda$, see \cite{Z7}.%

\vspace{0.4cm}%

We start by giving a counterpart to Theorem \ref{thm: exponents of a} for weak convergence, where b) is in the spirit of \cite{Khi}.

\begin{theorem}\label{thm: exponents of a in prob}
~

\noindent\textbf{a)} For $\gamma<1$ there exists $K_\gamma>0$ such that
\begin{equation*}
\frac{\sum_{k=1}^{n}\left(\mathsf{a}_{k}^{\prime
}\right)^{\gamma}}{n}\overset{\mathcal{L}(\lambda)}{\Longrightarrow
} K_\gamma.
\end{equation*}

\noindent\textbf{b)} For the case $\gamma=1$ we have
\begin{equation*}
\frac{\sum_{k=1}^{n}\mathsf{a}_{k}^{\prime
}}{n\log_2 n}\overset{\mathcal{L}(\lambda)}{\Longrightarrow
} \log 2.
\end{equation*}

\noindent\textbf{c)} If $\gamma>1$ we have
\begin{equation*}
\frac{\mathsf{S}_n^{b_n}}{d_n}\overset{\mathcal{L}(\lambda)}{\Longrightarrow
} 1,
\end{equation*}
where $\mathsf{S}_n^{b_n}$ is defined as in Theorem \ref{thm: exponents of a}, $(d_n)$ is given as in \eqref{eq: d_n} and $\lim_{n\to\infty}b_n=\infty$ and $b_n=o(n^{1-\epsilon})$.
\end{theorem}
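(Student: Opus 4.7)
The three parts correspond to three regimes of integrability of $(\mathsf{a}_k')^\gamma$ under the Gauss measure $\mu$ (with density $(1+x)^{-1}/\log 2$), and in each case I would start from the corresponding a.s.\ result if one is available. For part (a), the observable $(\mathsf{a}_1')^\gamma$ is $\mu$-integrable because $\mu(\mathsf{a}_1=k)\sim 1/(k^{2}\log 2)$ and $\sum_{p\in\mathbb{P}} p^{\gamma-2}<\infty$ when $\gamma<1$. Theorem \ref{thm: exponents of a}(a) already furnishes the a.e.\ convergence; since the limit $K_\gamma$ is a constant, a.s.\ convergence under $\mu$ coincides with convergence in probability, and for every $P\ll\lambda$ this transfers to $P$ by absolute continuity. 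For a constant limit, convergence in probability and in distribution coincide, which is precisely strong distributional convergence.

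For part (b), I would combine the trimmed strong law of Theorem \ref{T_SLLN}(b) -- giving $(\sum_{k=1}^{n}\mathsf{a}_{k}'-\mathsf{M}_{n}')/(n\log_{2}n)\to 1/\log 2$ a.e. -- with a weak bound on the maximum. From the tail estimate $\mu(\mathsf{a}_1'>t)\sim 1/(t\,\log t\,\log 2)$, itself a consequence of the PNT and $\mu(\mathsf{a}_1=k)\sim 1/(k^{2}\log 2)$, the union bound yields
\[
  \mu\!\bigl(\mathsf{M}_n'>\varepsilon n\log_{2}n\bigr) \leq n\,\mu\!\bigl(\mathsf{a}_1'>\varepsilon n\log_{2}n\bigr) = O\!\Bigl(\tfrac{1}{\log n\,\log_{2}n}\Bigr) \longrightarrow 0,
\]
so $\mathsf{M}_n'/(n\log_{2}n)\to 0$ in probability, in sharp contrast with the a.s.\ divergence recorded in Corollary \ref{cor: Ex_PtwGrowth1}. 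Summing the two limits produces the asserted distributional convergence under $\mu$, and thereby under every $P\ll\lambda$.

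For part (c), the range $b_n/\log_{2}n\to\infty$ is already contained in Theorem \ref{thm: exponents of a}(b). The new content is the weaker condition $b_n\to\infty$, which I would handle by a second-moment argument on the trimmed sum. Using the exponential $\psi$-mixing of the Gauss map, one reduces to an i.i.d.\ model with common tail $\mu(\mathsf{a}_1'>t)\sim 1/(t\,\log t\,\log 2)$; the order statistic $\mathsf{a}_{\sigma(j)}'$ then concentrates near $F^{-1}(1-j/n)\sim n/(j\,\log n\,\log 2)$, and a Riemann-sum estimate
\[
  \sum_{j=b_n+1}^{n}\Bigl(\frac{n}{j\,\log n\,\log 2}\Bigr)^{\!\gamma} \sim \frac{n^{\gamma}\,b_n^{1-\gamma}}{(\gamma-1)(\log 2)^{\gamma}(\log n)^{\gamma}} = d_n
\]
identifies the normalizing sequence. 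Since the variance of the trimmed sum is $o(d_n^{2})$ as soon as $b_n\to\infty$, Chebyshev's inequality yields convergence in probability, once more promoted to strong distributional convergence by the constant-limit argument.

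The main technical obstacle is in part (c): justifying that the dependent sequence $(\mathsf{a}_k')$ can be replaced by its i.i.d.\ analogue down to the weakest trimming rate $b_n\to\infty$, in contrast to the condition $b_n\gg\log_{2}n$ needed for the Borel-Cantelli-based a.s.\ statement in Theorem \ref{thm: exponents of a}(b). A block decomposition exploiting the $\psi$-mixing of the Gauss map, coupling finite-length blocks to independent copies, should suffice, after which everything reduces to standard manipulations of regularly varying tails.
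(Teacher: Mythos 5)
For part (a) your route coincides with the paper's: $\gamma<1$ makes $(\mathsf{a}')^\gamma$ integrable, Theorem~\ref{thm: exponents of a}(a) gives a.e.\ convergence, and a.e.\ convergence to a constant gives convergence in $P$-probability (hence in distribution) for every $P\ll\lambda$.

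For part (b) you take a genuinely different and more self-contained route. The paper simply invokes Aaronson's weak law for CF-mixing partitions from \cite{A2}, matching the truncated mean $L'$ from Lemma~\ref{L_PrimeDigitTail} with the normalization $b_n=n\log_2 n/\log 2$. You instead combine the trimmed strong law of Theorem~\ref{T_SLLN}(b) with the union bound $\mu_{\mathfrak G}(\mathsf{M}_n'>\varepsilon n\log_2 n)\le n\,\mu_{\mathfrak G}(\mathsf{a}'>\varepsilon n\log_2 n)=O(1/(\log n\,\log_2 n))\to 0$, showing $\mathsf{M}_n'/(n\log_2 n)\to0$ in probability, and add the two limits. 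Both approaches are correct, and both produce the constant $1/\log 2$; the value $\log 2$ appearing in the theorem statement is a slip in the statement, not in your derivation. The transfer to all $P\ll\lambda$ by absolute continuity of $P$ w.r.t.\ $\mu_{\mathfrak G}$ is fine, since convergence in $\mu_{\mathfrak G}$-probability to a constant passes to convergence in $P$-probability.

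For part (c) there is a genuine gap. Your computation of the normalization
$d_n\sim n^\gamma b_n^{1-\gamma}/\bigl((\gamma-1)(\log 2)^\gamma(\log n)^\gamma\bigr)$
is correct and matches \eqref{eq: d_n}, but the entire substance of the assertion at the trimming rate $b_n\to\infty$ (rather than $b_n/\log_2 n\to\infty$) is the variance bound $\mathrm{Var}(\mathsf{S}_n^{b_n})=o(d_n^2)$ for the dependent Gauss-map process, and this you merely postulate. Your suggested reduction to an i.i.d.\ model by ``block decomposition and coupling'' is exactly the nontrivial step: the trimmed sum depends on the global order statistics of the sample, not on local block quantities, so standard $\psi$-mixing block arguments for Birkhoff sums of bounded or truncated observables do not transfer directly, and the needed control of truncated second moments and of the random truncation level under $\psi$-mixing is precisely the content of the result the paper cites, \cite[Theorem 1.8]{KS_mean} (which in fact yields convergence in $L^1$, hence in probability, as noted in the remark following the theorem). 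Without reproducing that analysis or an adequate substitute, your sketch of (c) does not constitute a proof. The paper's proof of (c) simply reuses the verification of the hypotheses of \cite{KS_mean} already carried out in the proof of Theorem~\ref{thm: exponents of a}(b) (Property $\mathfrak C$, Property $\mathfrak D$, the tail asymptotics, and the de Bruijn conjugate computation) and applies the cited theorem.
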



\begin{remark}
 Indeed by \cite{KS_mean} the stronger result of convergence in mean follows for c).
 It is not proven that for the situation in c) convergence in probability can not hold for a lightly trimmed sum, i.e.\ a sum from which only a finite number of large entries, being independent of $n$ is removed.
 However, it follows from \cite{A2} that $\sum_{k=1}^n(\mathsf{a}_k^{\prime})^{\gamma}$ normed by the right norming sequence converges to a non-degenerate Mittag-Leffler distribution if $\gamma>1$.
 On the other hand, by \cite{kesten} it follows that light trimming does not have any influence on distributional convergence if the random variables considered are i.i.d.
\end{remark}

\vspace{0.4cm}

As we have seen in the previous section, the maximum $\mathsf{M}_{n}^{\prime}$ has a large influence then the whole system, in the following we will give its distributional convergence.
We let $\Theta$ denote a positive random variable with $\Pr[\Theta\leq y]=e^{-1/y}$, $y>0$ and get the following counterpart to \cite{Phi76}.

\begin{theorem}
[\textbf{Distributional convergence of} $\mathsf{M}_{n}^{\prime}$%
]\label{T_ForPhilipp}The maximum $\mathsf{M}_{n}^{\prime}$ of the prime digits
converges in distribution,
\begin{equation}
\frac{\log2\,\log n}{n}\cdot\mathsf{M}_{n}^{\prime}\overset{\mathcal{L}%
(\lambda)}{\Longrightarrow}\Theta\text{ }\quad\text{as }n\rightarrow
\infty\text{.}\label{Eq_StrgDistrCgeMax}%
\end{equation}

\end{theorem}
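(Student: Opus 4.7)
The plan is to read the statement as an extreme-value theorem for the stationary sequence $(\mathsf{a}_n^{\prime})_{n\geq 1}$ under the Gauss measure $\mu_G$ (with density $\frac{1}{(\log 2)(1+x)}$ w.r.t.\ $\lambda$), in the spirit of \cite{Phi76}. First I would compute the tail of $\mathsf{a}_1^{\prime}$. Since
\begin{equation*}
\mu_G(\{\mathsf{a}_1=m\}) = \frac{1}{\log 2}\log\frac{(m+1)^2}{m(m+2)} = \frac{1}{\log 2\cdot m^2}\bigl(1+O(1/m)\bigr),
\end{equation*}
summing over primes $p>b$ and invoking the prime number theorem (via partial summation together with the elementary asymptotic $\int_b^\infty dx/(x^2\log x)\sim 1/(b\log b)$) yields
\begin{equation*}
\mu_G(\{\mathsf{a}_1^{\prime}>b\}) \;=\; \frac{1}{\log 2}\sum_{p\in\mathbb{P},\,p>b}\frac{1}{p^2}\bigl(1+O(1/p)\bigr)\;\sim\;\frac{1}{\log 2\cdot b\log b}\quad\text{as }b\to\infty.
\end{equation*}

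Second, set $b_n(y):=yn/(\log 2\cdot\log n)$ for $y>0$. Since $\log b_n(y)\sim\log n$, the preceding estimate gives
\begin{equation*}
n\cdot\mu_G(\{\mathsf{a}_1^{\prime}>b_n(y)\}) \;\longrightarrow\; \frac{1}{y},
\end{equation*}
which is exactly the mean number of exceedances needed to produce the Fr\'echet-type limit $\Pr[\Theta\leq y]=e^{-1/y}$.

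Third, to convert this tail estimate into a distributional limit for $\mathsf{M}_n^{\prime}$, I would invoke the extreme-value theorem of \cite{Phi76}, whose proof for the full digit maximum rests on the exponential $\psi$-mixing of the Gauss map. The argument transfers to the thresholded events $\{\mathsf{a}_n^{\prime}>b\}$, which are countable unions of rank-one cylinders exactly like $\{\mathsf{a}_n>b\}$, and the standard no-clustering condition reduces to $n^2\,\mu_G(\{\mathsf{a}_1^{\prime}>b_n\})^2=O(1)$, which is automatic from the tail asymptotic. One thereby obtains $\mu_G(\{\mathsf{M}_n^{\prime}\leq b_n(y)\})\to e^{-1/y}$. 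The upgrade from $\mu_G$ to strong distributional convergence under arbitrary $P\ll\lambda$ then follows from exactness of the Gauss map and the general framework of \cite{Z7}.

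The main obstacle I expect is step three: one must confirm that the $\psi$-mixing bounds apply \emph{uniformly} over the family of cylinder unions defining the thinned sequence, and that the prime-number-theorem error terms remain negligible at the scale $b_n(y)\asymp n/\log n$ driving the limit. Fortunately only first-order asymptotics of $\pi$ are required here, so the sharper error estimates needed in part c) of Theorem \ref{T_PtwGrowth1} are not necessary.
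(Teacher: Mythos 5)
Your proof proposal is correct in its essential steps, but it takes a genuinely different route from the paper. The paper obtains Theorem \ref{T_ForPhilipp} as a direct corollary of the Poisson limit result, Theorem \ref{T_NewPoisson}~a): since that theorem (via Theorem~10.2.a) of \cite{ZHit}) shows that $\mu_{\mathfrak{G}}(A_l')\,\varphi_{l,1}'$ converges to a standard exponential variable, and since $\{\mathsf{M}_n' < l\} = \{\varphi_{l,1}' \geq n\}$, the distributional convergence of the normalized maximum follows immediately from the tail asymptotics in Lemma~\ref{L_PrimeDigitTail}. You instead argue through the classical extreme-value-theory template for $\psi$-mixing stationary sequences (the approach of \cite{Phi76}): compute the tail, rescale so that $n\,\mu_{\mathfrak{G}}(\{\mathsf{a}_1' > b_n(y)\})\to 1/y$, establish Fr\'echet-type convergence of $\mathsf{M}_n'/b_n(1)$ under $\mu_{\mathfrak{G}}$, and upgrade to strong distributional convergence via exactness and \cite{Z7}. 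Both work; the paper's proof is more economical because the Poisson machinery is already built, while yours is more self-contained and does not rely on the hitting-time framework of \cite{ZHit}.

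Two small remarks. First, the condition you label ``the standard no-clustering condition'' is not quite Leadbetter's $D'(u_n)$; that condition controls $n\sum_{j} \mu_{\mathfrak{G}}(\{\mathsf{a}_1' > u_n\}\cap\{\mathsf{a}_j' > u_n\})$ rather than $n^2\mu_{\mathfrak{G}}(\{\mathsf{a}_1'>u_n\})^2$. It is nevertheless satisfied here because the events $\{\mathsf{a}_j' > u_n\}$ are $\xi$-measurable, so the exponential $\psi$-mixing bound applies directly and yields the needed decorrelation. Second, your worry about ``uniformity over the family of cylinder unions'' is moot: the $\psi$-mixing coefficients as defined in Section~\ref{Sec_WarmUp} already take a supremum over all sets measurable with respect to the initial block of the partition and all tail sets, so uniformity is built in. Once these two points are tidied up, the proposal is a complete and correct alternative proof.
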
%
\vspace{0.4cm}%

A related classical topic, introduced by Doeblin \cite{Doeblin}, is the Poissonian
nature of occurrences of very large CF-digits. For $l\geq 1$ let $\varphi
_{l}=\varphi _{l,1}:=\inf \{k\geq 1:\mathsf{a}_{k}\geq l\}$, the first
position in the CF-expansion at which a digit $\geq l$ shows up, and $%
\varphi _{l,i+1}:=\inf \{k\geq 1:\mathsf{a}_{\varphi _{l,i}+k}\geq l\}$ the
distance between the $i$th and $(i+1)$st occurrence. Defining $\Phi
_{l}:I\rightarrow \lbrack 0,\infty ]^{\mathbb{N}}$ as $\Phi _{l}:=(\varphi
_{l,1},\varphi _{l,2},\ldots )$ and letting $\Phi _{\mathrm{Exp}}$ denote an
i.i.d.\ sequence of normalized exponentially distributed random variables, we
can express this classical result by stating that
\begin{equation*}
\frac{1}{\log 2}\frac{1}{l}\cdot \Phi _{l}\overset{\lambda }{\Longrightarrow
}\,\Phi _{\mathrm{Exp}}\quad \text{as }l\rightarrow \infty .
\end{equation*}%
Turning to prime digits, we shall consider the corresponding quantities $%
\varphi _{l,i}^{\prime }$ with $\varphi _{l,0}^{\prime }:=0$ and $\varphi
_{l,i+1}^{\prime }:=\inf \{k\geq 1:\mathsf{a}_{\varphi _{l,i}^{\prime
}+k}^{\prime }\geq l\}$, $i\geq 0$, and the processes $\Phi _{l}^{\prime
}:=(\varphi _{l,1}^{\prime },\varphi _{l,2}^{\prime },\ldots )$ of distances
between consecutive occurrences of prime digits of size at least $l$. In
fact, we also provide refined versions of the limit theorem which show that,
asymptotically, both the relative size compared to $l$ of such a large prime
digit $\mathsf{a}_{\varphi _{l,i}^{\prime }}^{\prime }$ and its residue
class for a given modulus $m$, are stochastically independent of the
positions $\varphi _{l,i}^{\prime }$ at which they occur. (These statements
are parallel to Propositions 10.1 and 10.2 of \cite{ZHit}. A 
$(q_1,\ldots ,q_d)$-Bernoulli sequence is an iid sequence of random variables 
which can assume $d$ different values with respective probabilities $q_1,\ldots ,q_d$.)

\begin{theorem}[\textbf{Poisson limits for large prime CF-digits}]
\label{T_NewPoisson}The sequences $\Phi _{l}^{\prime }$ of positions at
which large prime digits occur satisfy the following.

\noindent\textbf{a)} Their distances converge to an i.i.d.\ sequence of exponential
variables,
\begin{equation}
\frac{1}{\log 2}\frac{1}{l\log l}\cdot \Phi _{l}^{\prime }\overset{\mathcal{L%
}(\lambda )}{\Longrightarrow }\,\Phi _{\mathrm{Exp}}\quad \text{as }%
l\rightarrow \infty .
\end{equation}

\noindent\textbf{b)} Take any $\vartheta \in (0,1)$, let $\psi _{l,i}^{\prime }\ $be
the indicator function of $\{\mathsf{a}_{\varphi _{l,i}^{\prime }}^{\prime
}\geq l/\vartheta \}$ and set $\Psi _{l}^{\prime }:=(\psi _{l,1}^{\prime
},\psi _{l,2}^{\prime },\ldots )$, which identifies those prime digits $\geq
l$ which are in fact $\geq l/\vartheta $. Then
\begin{equation}
\left( \frac{1}{\log 2}\frac{1}{l\log l}\cdot \Phi _{l}^{\prime },\Psi
_{l}^{\prime }\right) \overset{\mathcal{L}(\lambda )}{\Longrightarrow }%
(\,\Phi _{\mathrm{Exp}},\Psi ^{\prime })\quad \text{as }l\rightarrow \infty
\text{,}
\end{equation}%
where $(\,\Phi _{\mathrm{Exp}},\Psi ^{\prime })$ is an independent pair with
$\Psi ^{\prime }$ a $({ 1-\vartheta},\vartheta)$-Bernoulli sequence.

\vspace{0.4cm}

\noindent\textbf{c)} Fix an integer $m\geq 2$. For $l>m$ define $\upsilon
_{l,i}^{\prime }:I\rightarrow \{j\in \{1,\ldots ,m\}:j$ relatively prime to $%
m\}$ by $\upsilon _{l,i}^{\prime }(x):=j$ if
$\mathsf{a}_{\varphi_{l,i}^{\prime }}^{\prime }(x)\equiv j \mod m$,
so that $\Upsilon
_{l}^{\prime }:=(\upsilon _{l,1}^{\prime },\upsilon _{l,2}^{\prime },\ldots
) $ identifies the residue classes mod $m$ of the prime digits $\mathsf{a}%
_{\varphi _{l,i}^{\prime }}^{\prime }$. Then%
\begin{equation}
\left( \frac{1}{\log 2}\frac{1}{l\log l}\cdot \Phi _{l}^{\prime },\Upsilon
_{l}^{\prime }\right) \overset{\mathcal{L}(\lambda )}{\Longrightarrow }%
(\,\Phi _{\mathrm{Exp}},\Upsilon ^{\prime })\quad \text{as }l\rightarrow
\infty \text{,}
\end{equation}%
where $(\,\Phi _{\mathrm{Exp}},\Upsilon ^{\prime })$ is an independent pair
with $\Upsilon ^{\prime }$ a $(\frac{1}{\phi (m)},\ldots ,\frac{1}{\phi (m)}%
) $-Bernoulli sequence. (Here $\phi (m)$ denotes the Euler totient.)
\end{theorem}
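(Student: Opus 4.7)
The strategy is to view the prime-digit events as hits to a family of rare target sets for the Gauss map $T$ on $I$, and then to invoke the general Poisson limit theorems for such targets from \cite{ZHit} --- specifically, the versions underlying Propositions 10.1 and 10.2 cited in the statement. Set
\[
A_l \;:=\; \{\mathsf{a}_1 \in \mathbb{P}\} \cap \{\mathsf{a}_1 \geq l\} \;=\; \bigsqcup_{p \in \mathbb{P},\, p \geq l}\bigl(\tfrac{1}{p+1},\tfrac{1}{p}\bigr],
\]
a disjoint union of full rank-one cylinders. By construction, $\varphi_{l,i}'$ is the $i$-th return time of the $T$-orbit to $A_l$, so $\Phi_l'$ is the inter-arrival sequence. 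Using the Gauss-measure cylinder mass $\mu(\{\mathsf{a}_1 = p\}) = \tfrac{1}{\log 2}\log\bigl(1+\tfrac{1}{p(p+2)}\bigr) \sim \tfrac{1}{p^2 \log 2}$ and the consequence $\sum_{p \in \mathbb{P},\, p \geq l} p^{-2} \sim 1/(l \log l)$ of the prime number theorem (by Abel summation from $\pi(x) \sim x/\log x$), one obtains
\[
\mu(A_l) \;\sim\; \frac{1}{\log 2}\cdot\frac{1}{l \log l}\qquad (l \to \infty),
\]
which is exactly the normalisation appearing in parts (a)--(c).

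For part (a), since $A_l$ is a union of rank-one cylinders for the Markov structure of $T$ and $\mu(A_l) \to 0$, the abstract Poisson limit theorem for rare events (in the form used in \cite{ZHit} for the Gauss map) yields $\mu(A_l)\,\Phi_l' \overset{\mathcal{L}(\lambda)}{\Longrightarrow} \Phi_{\mathrm{Exp}}$, which upon substituting the asymptotic for $\mu(A_l)$ gives the claim. For (b) and (c), I would apply the marked version of this theorem. The marks $\psi_{l,i}'$ and $\upsilon_{l,i}'$ are Borel functions of $\mathsf{a}_1$ evaluated at the $i$-th arrival, hence measurable with respect to the cylinder that is hit. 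The marked Poisson theorem then produces joint convergence to $(\Phi_{\mathrm{Exp}}, \Psi')$ resp.\ $(\Phi_{\mathrm{Exp}}, \Upsilon')$ with the mark sequence i.i.d.\ and independent of $\Phi_{\mathrm{Exp}}$, provided the conditional distribution of the mark given a hit stabilises. For (b) this amounts to
\[
\frac{\mu(A_l \cap \{\mathsf{a}_1 \geq l/\vartheta\})}{\mu(A_l)} \;=\; \frac{\mu(A_{\lceil l/\vartheta\rceil})}{\mu(A_l)} \;\longrightarrow\; \vartheta,
\]
which is immediate from the asymptotic for $\mu(A_l)$. For (c) it requires
\[
\frac{\mu(A_l \cap \{\mathsf{a}_1 \equiv j \bmod m\})}{\mu(A_l)} \;\longrightarrow\; \frac{1}{\phi(m)}
\]
for every $j$ coprime to $m$, which follows from the prime number theorem in arithmetic progressions $\pi(x;m,j) \sim \pi(x)/\phi(m)$ through the same Abel-summation argument applied to $\sum_{p \equiv j (m)} p^{-2}$.

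The main obstacle is verifying that the sets $A_l$, being not single cylinders but countable unions indexed by primes, still satisfy the hypotheses of the (marked) Poisson limit theorem in \cite{ZHit}: one needs control on short returns to $A_l$ under $T$ (ruling out asymptotic clustering) and the customary distortion regularity. Since $A_l$ sits inside the half-open interval $\{\mathsf{a}_1 \geq l\} = (0, 1/l]$, whose short-return statistics for the Gauss map are well understood and known to produce genuine Poisson limits (this is the classical Doeblin result recalled just before the theorem), restricting from this interval to its prime sub-support introduces no new clustering. With this verification in hand, all three conclusions reduce to the measure and conditional-mass asymptotics above.
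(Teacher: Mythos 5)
Your proposal matches the paper's proof in all essentials: you identify the same rare target sets $A_l'=\{\mathsf{a}'\geq l\}$ (unions of rank-one cylinders), compute the same Gauss-measure asymptotic $\mu_{\mathfrak{G}}(A_l')\sim(\log 2\cdot l\log l)^{-1}$ from Lemma \ref{L_PrimeDigitTail}, and invoke the (marked) hitting-time limit theorems from \cite{ZHit} (Theorem 10.2 there) together with the stabilising mark ratios $\vartheta$ and $1/\phi(m)$, exactly as the paper does for parts a), b), c). The only overhead in your write-up is the final paragraph's worry about short returns and distortion for the countable union $A_l'$: in the CF-mixing framework of \cite{ZHit}, Theorem 10.2 applies to any $\xi$-measurable target, so $\xi$-measurability of $A_l'$ (which you already note) is all that is needed, and no separate short-return or clustering estimate must be checked.
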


\vspace{0.4cm}%

We finally look at the distribution of a function which counts how many $\mathsf{a}_n^{\prime}$ fall into particular sets $A_n$ giving a limit theorem in the spirit of \cite{Phi70, KS}.
We let $\mathcal{N}$ denote a positive random variable with $\Pr[\mathcal{N}\leq y]=\int_0^y e^{-t^2/2}\,\mathrm{d}t/\sqrt{2\pi}$, $y>0$.

\begin{theorem}[\textbf{A CLT for counting primes in CF}]\label{thm: counting CLT}
Suppose that either
\begin{enumerate}[\rm (A)]
\item\label{en: clt 1} $A_n\coloneqq \left\{\mathsf{a}_n^{\prime}\geq b_n\right\}$ with $(b_n)\in\mathbb{R}^{\mathbb{N}}$ and $\sum_{n:b_{n}>1} {1}/{b_n\log b_n} =\infty$,
\item\label{en: clt 2} $A_n\coloneqq {\left\{\mathsf{a}_n^{\prime}= d_n\right\}}$ with $(d_n)$ a sequence of primes and $\sum_{n\in\mathbb{N}}{1}/{d_{n}^2}=\infty$,
 \item\label{en: clt 3} $A_n\coloneqq \left\{d_n\leq \mathsf{a}_n^{\prime} \leq d_n \cdot \left(1+\frac{1}{c_n}\right)\right\} $
with $(d_n)$ a sequence of natural numbers tending to infinity, $(c_n)$ a sequence
of positive numbers with $c_n \leq d_n^{0.475}$ and
$\sum_{n=1}^{\infty}{1}/{\left(c_n d_n\log(d_n)\right)}=\infty$.

\end{enumerate}
Then, for $S_n\coloneqq \sum_{k=1}^n\mathbbm{1}_{A_k}$ the following central limit theorem holds:
\begin{equation*}
\frac{S_n-\int S_n\,\mathrm{d}\mu_{\mathfrak{G}}}{
 \sqrt{ \int \left(S_n-\int S_n\,\mathrm{d}\mu_{\mathfrak{G}}\right)^2}}\overset{\mathcal{L}%
(\lambda)}{\Longrightarrow}\mathcal{N}\text{ }\quad\text{as }n\rightarrow
\infty.
\end{equation*}
\end{theorem}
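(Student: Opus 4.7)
The plan is to convert the problem into a central limit theorem for a non-stationary sequence of indicators of cylindrical sets under the Gauss shift $T$, and then exploit exponential mixing. Write $\mathsf{a}_n^{\prime} = \mathsf{a}_1^{\prime}\circ T^{n-1}$, so that $\mathbbm{1}_{A_n} = \mathbbm{1}_{B_n}\circ T^{n-1}$, where $B_n\subset I$ depends only on the first digit and is a union of fundamental intervals of rank one: explicitly $B_n=\bigcup_{p\in\mathbb{P}\cap J_n}\{\mathsf{a}_1=p\}$ with $J_n=[b_n,\infty)$, $\{d_n\}$, or $[d_n,d_n(1+1/c_n)]$ in cases \ref{en: clt 1}--\ref{en: clt 3} respectively. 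Since each $\mathbbm{1}_{B_n}$ is a $\mu_{\mathfrak{G}}$-measurable function of $\mathsf{a}_1$ alone, we are studying $S_n=\sum_{k=1}^n \mathbbm{1}_{B_k}\circ T^{k-1}$, a partial sum of uniformly bounded functions along the orbit of an exponentially mixing map.

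The first computational step is to evaluate $p_n:=\mu_{\mathfrak{G}}(B_n)$. Using $\mu_{\mathfrak{G}}(\{\mathsf{a}_1=k\})=\log(1+\tfrac{1}{k(k+2)})/\log 2\sim 1/((\log 2)k^2)$ together with the prime number theorem (with error term, which is where the exponent $0.475$ enters case \ref{en: clt 3}), one obtains
\[
p_n\sim \frac{1}{\log 2}\cdot\frac{1}{b_n\log b_n},\qquad p_n\sim\frac{1}{(\log 2)d_n^2},\qquad p_n\sim\frac{1}{(\log 2)c_nd_n\log d_n},
\]
in the three cases, so that in each case the standing hypothesis is precisely $\sum_n p_n=\infty$, hence $\int S_n\,\mathrm{d}\mu_{\mathfrak{G}}=\sum_{k\le n}p_k\to\infty$.

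Next I would control the variance $\sigma_n^2:=\mathrm{Var}_{\mu_{\mathfrak{G}}}(S_n)$. Because $B_k$ lies in the finite partition generated by $\mathsf{a}_1$, the exponential decay of correlations of the Gauss map (uniformly on, say, characteristic functions of cylinders, equivalently on indicators of $\mathsf{a}_1$-measurable sets) yields
\[
\bigl|\mathrm{Cov}_{\mu_{\mathfrak{G}}}(\mathbbm{1}_{B_j}\circ T^{j-1},\mathbbm{1}_{B_k}\circ T^{k-1})\bigr|\le C\theta^{k-j}\min(p_j,p_k),
\]
for some $\theta\in(0,1)$. Summing off-diagonal contributions gives $\sigma_n^2=\sum_{k\le n}p_k(1+o(1))$, provided we can match this with a lower bound; the latter follows from the same mixing estimate applied to the positive correlation inequality together with $p_n\to 0$ (each $B_n$ has small measure, so $p_k(1-p_k)\sim p_k$). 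In particular $\sigma_n^2\to\infty$. Then the CLT for $\psi$-mixing triangular arrays (applicable because the Gauss system is exponentially $\psi$-mixing with respect to the $\sigma$-algebras generated by finite blocks of digits, see e.g.\ Philipp's work as referenced in \cite{Phi70, KS}), combined with the trivial Lindeberg condition $\|\mathbbm{1}_{A_k}\|_\infty\le 1$ and $\sigma_n\to\infty$, yields $(S_n-\mathbb{E} S_n)/\sigma_n\Longrightarrow\mathcal{N}$ under $\mu_{\mathfrak{G}}$. Strong distributional convergence under any $P\ll\lambda$ follows from the general machinery of \cite{Z7}, since the Gauss map is exact and the limit is deterministic in the sense required: a distributional CLT for a bounded mixing sequence lifts from $\mu_{\mathfrak{G}}$ to $\mathcal{L}(\lambda)$ via the uniform equivalence of $\lambda$ and $\mu_{\mathfrak{G}}$ together with the coboundary/asymptotic-independence argument inherent in exponential mixing.

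The main obstacle is the variance asymptotics in the non-stationary setting: the sets $B_n$ can vary wildly (in case \ref{en: clt 1} they are nested, in case \ref{en: clt 2} they are singleton cylinders at prime values, in case \ref{en: clt 3} they are thin annular unions), so one cannot appeal to a stationary CLT directly. The key is to combine the a priori upper bound coming from exponential decay of correlations with a matching lower bound $\sigma_n^2\gtrsim\sum_{k\le n}p_k$, which requires showing that the positive diagonal $\sum p_k(1-p_k)$ is not cancelled by negative off-diagonal covariances; this is done by bounding $|\mathrm{Cov}|$ by $C\theta^{|k-j|}\min(p_j,p_k)$ and choosing a mixing gap $L_n$ so that $\theta^{L_n}\sum_{k\le n}p_k=o(\sum_{k\le n}p_k)$, which is always possible. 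Once the correct variance asymptotics is in hand, the CLT and its strong form are standard consequences of the results already invoked in the paper.
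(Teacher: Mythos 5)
Your approach is genuinely different from the paper's: the paper cites \cite[Theorem 3]{KS} directly, so that the entire proof reduces to checking the single divergence condition $\sum_n \lambda(A_n)\lambda(A_n^c)=\infty$, which follows from the measure estimates already established in the proof of Theorem \ref{T_PtwGrowth1} a), c) and in \cite[Theorem 5A]{KS}. You instead try to reprove the CLT from scratch out of the exponential CF-mixing of $\xi$. That is a reasonable strategy (it is close to what one would do to prove the KS result itself), but the variance control, which you correctly identify as the main obstacle, is where the argument has real gaps.

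First, the asymptotic $\sigma_n^2=\sum_{k\le n}p_k(1+o(1))$ is not true in general, and your derivation does not actually produce it. With the $\psi$-mixing bound $|\mathrm{Cov}(\mathbbm{1}_{A_j},\mathbbm{1}_{A_k})|\le C\theta^{|k-j|}p_jp_k$, the total off-diagonal contribution is bounded only by $\frac{C\theta}{1-\theta}\sum_{k\le n}p_k$, which is of the \emph{same} order as the diagonal, not smaller; you do not get the $o(\sum p_k)$ error you claim. Second, your justification of the matching lower bound ``each $B_n$ has small measure, so $p_k(1-p_k)\sim p_k$'' relies on $p_n\to 0$, which is false in case \ref{en: clt 1}: the hypothesis allows $b_n$ bounded (e.g.\ $b_n\equiv 2$), so $p_n$ can be a fixed positive constant. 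In that regime the off-diagonal covariances genuinely contribute to leading order, exactly as in the stationary CLT where the limiting variance is $\mathrm{Var}(X_0)+2\sum_{\ell\ge 1}\mathrm{Cov}(X_0,X_\ell)$, not $\mathrm{Var}(X_0)$. Third, what the proof really needs is $\sigma_n^2\to\infty$, and this is precisely the point where mixing alone does not suffice: one has to rule out coboundary-type degeneracy, i.e.\ that the centered sum is not $o$ of the naive variance. Your remark about ``choosing a mixing gap $L_n$ so that $\theta^{L_n}\sum p_k=o(\sum p_k)$'' does not address this; such an $L_n$ always exists but says nothing about cancellation. This non-degeneracy is exactly the content that \cite[Theorem 3]{KS} packages, which is why the paper can get away with checking only $\sum\lambda(A_n)\lambda(A_n^c)=\infty$. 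Finally, a small point: the limit $\mathcal{N}$ is Gaussian, not deterministic, so the phrase ``the limit is deterministic in the sense required'' is off; the passage to $\overset{\mathcal{L}(\lambda)}{\Longrightarrow}$ via \cite{Z7} is still available for random limits, but it requires verifying asymptotic $S$-invariance of the normalized functional, which in this non-stationary setting (the $B_k$ vary with $k$, so nothing telescopes) is a separate check rather than an automatic consequence of exponential mixing.
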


\section{The Gauss map and the prime digit
function\label{Sec_WarmUp}}

The results announced above express properties of certain stochastic processes
derived from the exceptionally well understood dynamical system generated by
the ergodic \emph{continued fraction map} (or \emph{Gauss map})
\[
S:(0,1]\rightarrow\lbrack0,1]\text{, \quad}Sx:=\frac{1}{x}-\left\lfloor
\frac{1}{x}\right\rfloor =\frac{1}{x}-k\text{ for }x\in\left(  \frac{1}%
{k+1},\frac{1}{k}\right]  =:I_{k}\text{, }k\geq1
\]
which, since \cite{G}, is known to preserve the probability density%
\[
h_{\mathfrak{G}}(x):=\frac{1}{\log2}\frac{1}{1+x}\text{, \quad}x\in I\text{.}%
\]
The invariant \emph{Gauss measure} $\mu_{\mathfrak{G}}$ on $\mathcal{B}_{I}$
defined by the latter, $\mu_{\mathfrak{G}}(B):=\int_{B}h_{\mathfrak{G}%
}(x)\,dx$, is exact (and hence ergodic). As hardly any textbook on ergodic
theory fails to point out, iteration of $S$ reveals the continued fraction
digits of any $x\in I$, in that
\[
x=\left[  \mathsf{a}_{1}(x),\mathsf{a}_{2}(x),\ldots\right]  \text{ \quad with
\quad}\mathsf{a}_{n}(x)=\mathsf{a}\circ S^{n-1}(x)\text{, }n\geq1\text{,}%
\]
where $\mathsf{a}:I\rightarrow\mathbb{N}$ is the \emph{digit function}
corresponding to the partition $\xi:=\{I_{k}:k\geq1\}$, i.e.\ $\mathsf{a}%
(x):=\left\lfloor 1/x\right\rfloor =k$ for $x\in I_{k}$. The stationary
sequence $(\mathsf{a}\circ S^{n})_{n\geq0}$ on the probability space
$(I,\mathcal{B}_{I},\mu_{\mathfrak{G}})$ thus obtained exhibits interesting
properties since $\mathsf{a}$ has infinite expectation, $\int_{I}%
\mathsf{a}\,d\mu_{\mathfrak{G}}=\sum_{k\geq1}k\,\mu_{\mathfrak{G}}%
(I_{k})=\infty$, as $\mu_{\mathfrak{G}}(I_{k})=\log(\frac{(k+1)^{2}}%
{k(k+2)})/\log2\sim1/(\log2\cdot k^{2})$ for $k\rightarrow\infty$. As in
classical probability theory, the \emph{tail behaviour of the distribution},
given by
\begin{equation*}
\mu_{\mathfrak{G}}\left(  \left\{  \mathsf{a}\geq K\right\}  \right)
=\frac{1}{\log2}\cdot\log\left(  \frac{K+1}{K}\right)  \sim\frac{1}{\log
2}\cdot\frac{1}{K}\text{ \quad as }K\rightarrow\infty
\end{equation*}
(which entails $L(N):=\int_{I}(\mathsf{a}\wedge N)\,d\mu_{\mathfrak{G}}%
=\sum_{K=1}^{N}\mu_{\mathfrak{G}}\left(  \left\{  \mathsf{a}\geq K\right\}
\right)  \sim\log N/\log2$ as $N\rightarrow\infty$), is the key to fine
asymptotic results. However, the study of the CF digit sequence goes beyond
standard results, since the random variables $\mathsf{a}\circ S^{n}$ are not
independent. Yet, it is well known that they still satisfy a strong form of
\emph{asymptotic independence} or \emph{mixing} in the following sense:

Given any measure preserving transformation $T$ on a probability space
$(\Omega,\mathcal{B},P)$, and a countable measurable partition $\gamma$ (mod
$P$), the $\psi$\emph{-mixing coefficients of} $\gamma$ are defined as
\[
\psi_{\gamma}(n):=\sup_{k\geq1}\left\{  \left\vert \frac{P(V\cap
W)}{P(V)P(W)}-1\right\vert :%
\begin{array}
[c]{ll}%
V\in\sigma(\bigvee_{j=0}^{k-1}T^{-j}\gamma),P(V)>0, & \\
W\in T^{-(n+k-1)}\mathcal{B},P(W)>0 &
\end{array}
\right\}  \text{, }n\geq1\text{.}%
\]

The partition $\gamma$ is said to be \emph{continued-fraction (CF-) mixing}
for the probability preserving system $(\Omega,\mathcal{B},P,T)$ if it is
generating, and if $\psi_{\gamma}(1)<\infty$ as well as $\psi_{\gamma
}(n)\rightarrow0$ for $n\rightarrow\infty$. (Note that $(\psi_{\gamma
}(n))_{n\geq1}$ is non-increasing.) Of course, the nomenclature is due to the
fact that
\begin{equation}
\xi\text{ is CF-mixing for }(I,\mathcal{B}_{I},\mu_{\mathfrak{G}}%
,S)\text{.}\label{Eq_CFmixforCF}%
\end{equation}
Actually, this system is \emph{exponentially CF-mixing}, in that there are
constants $C>0$ and $\rho\in(0,1)$ such that
\begin{equation*}
\psi_{\xi}(n)\leq C\,\rho^{n}\text{ \quad for }n\geq1
\end{equation*}
(which is related to Gauss' famous question mentioned in the introduction, see
e.g.\ \cite{IK} or \cite{Z4}).

\vspace{0.4cm}%

We are going to study occurrences of \emph{prime digits} by considering the
\emph{restricted digit function}
$\mathsf{a}^{\prime}:=
(\mathbbm{1}_{\mathbb{P}} \circ \mathsf{a}) \cdot \mathsf{a}
:I\rightarrow\{0\}\cup\mathbb{P}$.
As in
the case of $\mathsf{a}$, this function, as a random variable on
$(I,\mathcal{B}_{I},\mu_{\mathfrak{G}})$, still has infinite expectation.
Indeed, the prime number theorem (PNT) enables us to quickly determine the
all-important tail asymptotics for the distribution of $\mathsf{a}^{\prime}$.
The following lemma is the key to our analysis of the prime digit sequence.

\begin{lemma}
[\textbf{Tail behaviour and truncated expectation of} $\mathsf{a}^{\prime}$%
]\label{L_PrimeDigitTail}The distribution of $\mathsf{a}^{\prime}$ (with
respect to the Gauss measure) satisfies
\begin{equation}
\mu_{\mathfrak{G}}\left(  \left\{  \mathsf{a}^{\prime}\geq K\right\}  \right)
\sim\frac{1}{\log2}\cdot\frac{1}{K\log K}\text{ \quad as }K\rightarrow
\infty\text{.}\label{Eq_PrimeDigitTail}%
\end{equation}
In particular, $\mathsf{a}^{\prime}$ is not integrable, $\int_{I}%
\mathsf{a}^{\prime}\,d\mu_{\mathfrak{G}}=\infty$. Moreover,
\begin{equation}
L^{\prime}(N):=\int_{I}(\mathsf{a}^{\prime}\wedge N)\,d\mu_{\mathfrak{G}}%
\sim\frac{\log_{2}N}{\log2}\text{ \quad as }N\rightarrow\infty\text{.}%
\label{Eq_PrimeDigitWR}%
\end{equation}
so that $a^{\prime}(N):=N/L^{\prime}(N)\sim\log2\cdot N/\log_{2}N$ is
asymptotically inverse to $b^{\prime}(N):=(N\,\log_{2}N)/\log2$.
\end{lemma}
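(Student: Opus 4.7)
The starting observation is that $\{\mathsf{a}^{\prime}\geq K\}=\bigsqcup_{\mathrm{p}\in\mathbb{P},\,\mathrm{p}\geq K}I_{\mathrm{p}}$, since $\mathsf{a}^{\prime}$ vanishes on every cylinder $I_{k}$ with $k\notin\mathbb{P}$. Using $\mu_{\mathfrak{G}}(I_{k})=\log((k+1)^{2}/(k(k+2)))/\log 2=1/(k^{2}\log 2)+O(k^{-3})$ and noting that the $O(k^{-3})$ errors summed over primes $\geq K$ contribute $O(K^{-2})=o(1/(K\log K))$, the tail statement (\ref{Eq_PrimeDigitTail}) is reduced to showing
\[
\sum_{\mathrm{p}\in\mathbb{P},\,\mathrm{p}\geq K}\frac{1}{\mathrm{p}^{2}}\sim\frac{1}{K\log K}\qquad\text{as }K\to\infty.
\]

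This sum I would handle by Abel (Riemann--Stieltjes) summation against the prime counting function $\pi$, combined with the prime number theorem $\pi(t)\sim t/\log t$. Integration by parts gives
\[
\sum_{\mathrm{p}\geq K}\frac{1}{\mathrm{p}^{2}}=-\frac{\pi(K-1)}{K^{2}}+2\int_{K}^{\infty}\frac{\pi(t)}{t^{3}}\,dt.
\]
Feeding in PNT, the boundary term is asymptotic to $1/(K\log K)$ and the integral is asymptotic to $2\int_{K}^{\infty}dt/(t^{2}\log t)\sim 2/(K\log K)$; the latter asymptotic is easily verified by differentiating $1/(K\log K)$ and matching leading terms. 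The net contribution is $1/(K\log K)$, and multiplication by $1/\log 2$ yields (\ref{Eq_PrimeDigitTail}).

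For the truncated expectation (\ref{Eq_PrimeDigitWR}) I would apply the layer-cake identity $\int(X\wedge N)\,dP=\sum_{K=1}^{N}P(X\geq K)$, valid for nonnegative integer-valued $X$, to $X=\mathsf{a}^{\prime}$ under $\mu_{\mathfrak{G}}$, obtaining
\[
L^{\prime}(N)=\sum_{K=1}^{N}\mu_{\mathfrak{G}}(\{\mathsf{a}^{\prime}\geq K\})\sim\frac{1}{\log 2}\sum_{K=2}^{N}\frac{1}{K\log K}\sim\frac{1}{\log 2}\int_{2}^{N}\frac{dt}{t\log t}\sim\frac{\log_{2}N}{\log 2}.
\]
The closing assertion that $a^{\prime}(N)=N/L^{\prime}(N)$ and $b^{\prime}(N)=N\log_{2}N/\log 2$ are asymptotically inverse is then immediate from the tautology $a^{\prime}(N)L^{\prime}(N)=N$. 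The main technical obstacle is the Abel summation step, where the boundary term and the integral are individually of the same order $1/(K\log K)$, so the leading constants must be tracked with care in order to arrive at the correct coefficient $1$; any quantitative refinement would require an effective error in the prime number theorem, but the bare asymptotic form $\pi(t)\sim t/\log t$ suffices for the statement as given.
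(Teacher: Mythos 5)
Your proposal is correct, and it differs from the paper's proof only in the technique used to extract the tail asymptotic $\sum_{\mathrm{p}\geq K}\mathrm{p}^{-2}\sim 1/(K\log K)$. The paper reindexes the sum by the $n$th prime and invokes $\mathrm{p}_{n}\sim n\log n$, reducing the problem to the elementary tail estimate $\sum_{n\geq N}1/(n^{2}\log^{2}n)\sim 1/(N\log^{2}N)$ and then substituting $N(K)\sim K/\log K$. You instead perform Riemann--Stieltjes summation against $\pi(t)$ and feed in $\pi(t)\sim t/\log t$ directly, which produces a boundary term $\sim -1/(K\log K)$ and an integral $\sim 2/(K\log K)$ that must be combined to recover the leading constant $1$. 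Both routes are standard consequences of the prime number theorem and land in the same place; the paper's version sidesteps the sign-sensitive cancellation you flag as the "main technical obstacle," while yours makes the dependence on $\pi$ more transparent and avoids the intermediate reindexing. Your treatment of the $O(k^{-3})$ correction to $\mu_{\mathfrak{G}}(I_{k})$ and the layer-cake computation of $L^{\prime}(N)$ coincide with the paper's. One small quibble: the final assertion about $a^{\prime}$ and $b^{\prime}$ being asymptotically inverse is not literally "immediate from the tautology $a^{\prime}(N)L^{\prime}(N)=N$"; it also uses that $\log_{2}$ is slowly varying, so that $\log_{2}(b^{\prime}(N))\sim\log_{2}N$. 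The paper likewise leaves this as a "straightforward calculation," so the omission is cosmetic rather than a gap.
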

\vspace{0.4cm}

\begin{proof}
First, the PNT is easily seen (cf. \cite{HW}, Theorem 1.8.8) to imply that
\begin{equation}
\mathrm{p}_{n}\sim n\log n\text{ \quad as }n\rightarrow\infty\text{,}%
\label{Eq_PNT2}%
\end{equation}
where $\mathrm{p}_{n}$ denotes the $n$th prime number. Therefore,
\[
\sum_{n\geq N}\frac{1}{\mathrm{p}_{n}^{2}}\sim\sum_{n\geq N}\frac{1}%
{n^{2}(\log n)^{2}}\sim\frac{1}{N(\log N)^{2}}\text{ \quad as }N\rightarrow
\infty\text{.}%
\]
Letting $N(K)$ denote the least $n$ with $\mathrm{p}_{n}\geq K$, we have, as
$K\rightarrow\infty$,
\[
\mu_{\mathfrak{G}}\left(  \left\{  \mathsf{a}^{\prime}\geq K\right\}  \right)
=\sum_{\mathrm{p}\geq K,\mathrm{p}\in\mathbb{P}}\mu_{\mathfrak{G}%
}(I_{\mathrm{p}})\sim\frac{1}{\log2}\sum_{\mathrm{p}\geq K,\mathrm{p}%
\in\mathbb{P}}\frac{1}{\mathrm{p}^{2}}=\frac{1}{\log2}\sum_{n\geq N(K)}%
\frac{1}{\mathrm{p}_{n}^{2}}
\]
and, by PNT, $N(K)\sim K/\log K$. Combining these observations yields
(\ref{Eq_PrimeDigitTail}). The second statement is an easy consequence
thereof, since
\[
L^{\prime}(N)=\sum_{K=1}^{N}\mu_{\mathfrak{G}}\left(  \left\{  \mathsf{a}%
^{\prime}\geq K\right\}  \right)  \sim\frac{1}{\log2}\sum_{K=2}^{N}\frac
{1}{K\log K}\sim\frac{1}{\log2}\int_{2}^{N}\frac{dx}{x\log x}\text{ \ as
}N\rightarrow\infty\text{.}%
\]
Straightforward calculation verifies the assertions about $a^{\prime}$ and
$b^{\prime}$.
\end{proof}%

\begin{remark}
Several of the results allow for analogues in which prime digits are
replaced by digits belonging to other subsets $\mathbb{M}$ of the integers
for which $\pi _{\mathbb{M}}(n):=\#\mathbb{M}\cap \{1,\ldots ,n\}$ is
regularly varying with $\sum_{m\in \mathbb{M}}\frac{1}{m}=\infty $, like,
for example, the set of integers which are the product of exactly $k$ prime
numbers, see Theorem 3.5.11 of \cite{Ja}. (M.~Thaler, personal communication.)
\end{remark}

\section{Proofs of the results on a.e.\ convergence}

We are now ready for the proofs of our pointwise convergence results. We can
always work, without further mention, with the invariant measure
$\mu_{\mathfrak{G}}$, since it has the same null-sets as $\lambda$.

\begin{proof}
[\textbf{Proof of Proposition \ref{P_AsyFreqPrimes}.}] This, of course, is just
the ergodic theorem,
\[
\frac{1}{n}\sum_{k=1}^{n}\mathbbm{1}_{\mathbb{P}}\circ\mathsf{a}_{k}=\frac{1}{n}%
\sum_{k=0}^{n-1}\mathbbm{1}_{\mathbb{P}}\circ S^{k}\longrightarrow\mu_{\mathfrak{G}}%
(\mathbb{P})=\sum_{\mathrm{p}\in\mathbb{P}}\mu_{\mathfrak{G}}(I_{\mathrm{p}})\text{
\quad a.e.\ as }n\rightarrow\infty\text{.}%
\]
\end{proof}%

\vspace{0.4cm}%

In the following we will repeatedly appeal to the following version of
R\'{e}nyi's Borel-Cantelli Lemma (BCL) (as in Lemma 1 of \cite{ATZ}):
\begin{lemma}
[\textbf{R\'{e}nyi's Borel-Cantelli Lemma}]\label{L_RBCL}Assume that
$(E_{n})_{n\geq1}$ is a sequence of events in the probability space
$(\Omega,\mathcal{B},P)$ for which there is some $r\in(0,\infty)$ such that
\begin{equation*}
\frac{P(E_{j}\cap E_{k})}{P(E_{j})\,P(E_{k})}\leq r\qquad\text{whenever
}j,k\geq1\text{, }j\neq k\text{.}
\end{equation*}
Then $P(\{E_{n}$ infinitely often$\})>0$ iff $\sum_{n\geq1}P(E_{n})=\infty$.
\end{lemma}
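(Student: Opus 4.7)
The plan is to handle the two implications separately. The convergent direction is just the classical first Borel--Cantelli lemma and requires no hypothesis on pairwise interaction: if $\sum_{n}P(E_{n})<\infty$, then $P(\bigcup_{n\geq N}E_{n})\leq \sum_{n\geq N}P(E_{n})\to 0$ as $N\to\infty$, so $P(\{E_{n}\text{ i.o.}\})=0$.

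For the divergent direction I would use a second-moment (Paley--Zygmund) argument, applied to the tails of the counting process rather than the full partial sums, so that the conclusion is $P(\{E_{n}\text{ i.o.}\})>0$ and not merely $P(\bigcup E_{n})>0$. Concretely, for each $N\geq 1$ set $T_{N,n}:=\sum_{k=N}^{n}\mathbbm{1}_{E_{k}}$, so that $E[T_{N,n}]=\sum_{k=N}^{n}P(E_{k})$ tends to $\infty$ as $n\to\infty$ for every fixed $N$. The hypothesis $P(E_{j}\cap E_{k})\leq r\,P(E_{j})P(E_{k})$ for $j\neq k$ gives
\[
E[T_{N,n}^{2}]=E[T_{N,n}]+\sum_{\substack{N\leq j,k\leq n\\ j\neq k}}P(E_{j}\cap E_{k})\leq E[T_{N,n}]+r\,(E[T_{N,n}])^{2},
\]
and the standard Cauchy--Schwarz/Paley--Zygmund estimate then yields
\[
P\Bigl(\bigcup_{k=N}^{n}E_{k}\Bigr)=P(T_{N,n}>0)\geq \frac{(E[T_{N,n}])^{2}}{E[T_{N,n}^{2}]}\geq \frac{E[T_{N,n}]}{1+r\,E[T_{N,n}]}.
\]

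Letting $n\to\infty$ with $N$ fixed, the right-hand side tends to $1/r$, so $P(\bigcup_{k\geq N}E_{k})\geq 1/r$ uniformly in $N$. Continuity of $P$ from above then delivers $P(\{E_{n}\text{ i.o.}\})=\lim_{N\to\infty}P(\bigcup_{k\geq N}E_{k})\geq 1/r>0$. The only conceptual point worth flagging is that the pairwise bound buys just a positive lower bound, not probability one; a full zero-one conclusion would require genuine independence or an auxiliary tail/ergodicity argument, neither of which is available or needed here. Beyond the second-moment bookkeeping there is no substantial technical obstacle.
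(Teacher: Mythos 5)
Your proof is correct, and it is the standard second-moment (Paley--Zygmund type) argument for this R\'enyi-type refinement of the Borel--Cantelli lemma. The paper itself does not prove the lemma; it only cites Lemma~1 of \cite{ATZ}, where essentially the same estimate
\[
P\Bigl(\bigcup_{k=N}^{n}E_{k}\Bigr)\geq \frac{\bigl(\sum_{k=N}^{n}P(E_{k})\bigr)^{2}}{\sum_{N\leq j,k\leq n}P(E_{j}\cap E_{k})}
\]
combined with the pairwise bound and continuity from above is used, so there is no material difference. Two small remarks: the clean way to phrase the bookkeeping is exactly as you did, separating the diagonal term $\sum_{k}P(E_{k})$ from the off-diagonal sum so that the hypothesis applies only where $j\neq k$; and you are right to flag that the lemma as stated only delivers $P(\{E_{n}\ \text{i.o.}\})>0$ rather than $=1$, which is why the paper subsequently invokes exactness of the Gauss system to upgrade positive measure to full measure in the proof of Theorem~\ref{T_PtwGrowth1}~a).
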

\vspace{0.4cm}

This lemma enables us to prove Theorem \ref{T_PtwGrowth1}.\\

\begin{proof}
[\textbf{Proof of Theorem \ref{T_PtwGrowth1}.}]

\noindent\textbf{a)}
Note that
$\{\mathsf{a}_{j}^{\prime}>c\}=S^{-(j-1)}\{\mathsf{a}^{\prime}>c\}$ with
$\{\mathsf{a}^{\prime}>c\}$ measurable w.r.t.\ $\xi$. As a consequence of the
CF-mixing property (\ref{Eq_CFmixforCF}), we see that R\'{e}nyi's BCL applies
to show that
\begin{equation}
\mu_{\mathfrak{G}}(\{\mathsf{a}_{n}^{\prime}>b_{n}\text{ i.o.}\})>0\text{
\quad iff \quad}\sum_{n\geq1}\mu_{\mathfrak{G}}(\{\mathsf{a}_{n}^{\prime
}>b_{n}\})=\infty\text{.}\label{Eq_RBCLa}%
\end{equation}
By $S$-invariance of $\mu_{\mathfrak{G}}$ and Lemma \ref{L_PrimeDigitTail}, we
have $\mu_{\mathfrak{G}}(\{\mathsf{a}_{n}^{\prime}\geq b_{n}\})=\mu
_{\mathfrak{G}}(\{\mathsf{a}^{\prime}\geq b_{n}\})\sim1/(b_{n}\log b_{n})$, so
that divergence of the right-hand series in (\ref{Eq_RBCLa}) is equivalent to
that of $\sum_{n\geq1}(b_{n}\log b_{n})^{-1}$. Finally, again because of
$\{\mathsf{a}_{j}^{\prime}>c\}=S^{-(j-1)}\{\mathsf{a}^{\prime}>c\}$, the
set$\ \{\mathsf{a}_{n}^{\prime}>b_{n}$ i.o.$\}$ is easily seen to belong to
the tail-$\sigma$-field $\bigcap_{n\geq0}S^{-n}\mathcal{B}_{I} $ of $S$. The
system $(I,\mathcal{B}_{I},\mu_{\mathfrak{G}},S)$ being exact, the latter is
trivial mod $\mu_{\mathfrak{G}}$. Hence $\mu_{\mathfrak{G}}(\{\mathsf{a}%
_{n}^{\prime}>b_{n}$ i.o.$\})>0$ implies $\mu_{\mathfrak{G}}(\{\mathsf{a}%
_{n}^{\prime}>b_{n}$ i.o.$\})=1$.

\vspace{0.4cm}%
%

\noindent
\textbf{b) }Statement (\ref{Eq_DigitvsMax}) is seen by an easy routine
argument, as in the proof of Proposition 3.1.8 of \cite{IK}.

\vspace{0.4cm}
\noindent\textbf{c) }Without loss of generality we first assume that $c_n\leq 0.5$, for all $n$. If this doesn't hold, we can easily switch to a subsequence in which this holds and consider the subsequences separately.
By the prime number theorem we have
\begin{align*}
 \lambda(\mathsf{a}_{1}^{\prime}\in [d_n, d_n(1+1/c_n)])
 &= \lambda(\mathsf{a}_{1}^{\prime}\geq d_n)
 -\lambda(\mathsf{a}_{1}^{\prime}\geq d_n(1+1/c_n))\\
 &\asymp \frac{1}{d_n\log d_n}-\frac{1}{d_n(1+1/c_n)\log (d_n(1+1/c_n))}\\
 &\sim \frac{1}{c_nd_n\log d_n}.
\end{align*}

Next, we assume that $c_n>0.5$. We note that
\begin{align}
\MoveEqLeft\#\mathbb{P}\cap (d_n, d_n(1+1/c_n)]\cdot \lambda(\mathsf{a}_{1}=d_n)\notag\\
 &\leq \lambda(\mathsf{a}_{1}^{\prime}\in [d_n, d_n(1+1/c_n)])\notag\\
 &\leq \#\mathbb{P}\cap (d_n, d_n(1+1/c_n)]\cdot \lambda(\mathsf{a}_{1}=d_n(1+1/c_n)).\label{eq: primes times lambda}
\end{align}
Furthermore,
\begin{align}
 \lambda(\mathsf{a}_{1}=d_n)\asymp \frac{1}{d_n^2}\asymp \lambda(\mathsf{a}_{1}=d_n(1+1/c_n)).\label{eq: primes times lambda1}
\end{align}

On the other hand, we have by \cite{BHP_primes}, p.\ 562 that there exists $K>0$ such that
\begin{align*}
 \#\mathbb{P}\cap (d_n, d_n(1+1/c_n)]
 &\sim \pi (d_n(1+1/c_n))-\pi(d_n)
 \leq K\cdot \frac{d_n}{c_n\log d_n}.
\end{align*}
Combining this with \eqref{eq: primes times lambda} and \eqref{eq: primes times lambda1} yields the statement of c).

\vspace{0.4cm}%

\noindent
\textbf{d) }
This follows immediately from \cite[Theorem 6a]{KS}.
\end{proof}%

\vspace{0.4cm}%

\begin{proof}[Proof of Lemma \ref{L_SeriesLemma}]

By assumption there is some $\varepsilon \in (0,1)$ such that the set
$ M:= \{ n \geq 1: (n \log_2 n)/b_n \geq \varepsilon \} $
is infinite. Define $c(x):=\exp(\sqrt{\log x})$ and
$f(x):=x \log x \log_2 x$ for $x>1$, and note that
$c(x)<x$ for $x>e$.

Suppose that $n\in M$, $n \geq 4$, and $c(n)<k\leq n$.
Since $k\leq n$, we have $b_k = (b_k/k)k \leq (b_n/n)k
\leq (1/\varepsilon) k \log_2 n$, and thus
\begin{align*}
b_k \log b_k
& \leq
(1/\varepsilon) k \log_2 n (\log(1/\varepsilon) +\log k + \log_3 n)\\
& =
\frac{f(k)}{\varepsilon} \frac{\log_2 n}{\log_2 k}
\left(-
\frac{\log (\varepsilon)}{ \log k} + 1 + \frac{\log_3 n}{\log k}
\right).
\end{align*}
On the other hand, $c(n)<k$ implies $\log k > \sqrt{\log n}$ and hence
$$
\log_2 k > (1/2) \log_2 n,\;
\log k > 1, \;\text{ and }\; \log k > \log_3 n.
$$
Using these estimates we see that
$$
b_k \log b_k \leq  \frac{f(k)}{C(\varepsilon)} \;\;\;\text{ with } \;\;\;
C(\varepsilon):= \frac{\varepsilon}{2 \left( -\log (\varepsilon)+2  \right)} >0.
$$
Taking into account that $\log_3 x$ is a primitive of $1/f(x)$ we get
\begin{align*}
\sum_{k>c(n)} \frac{1}{b_k \log b_k}
& \geq
C(\varepsilon)   \sum_{c(n)<k\leq n} \frac{1}{f(k)}\\
& \geq
C(\varepsilon) \left( \int_{c(n)}^{n} \frac{dx}{f(x)}
 - \frac{1}{f( \left[  c(n) \right]   )} \right)
 =
C(\varepsilon) \left( \log 2
 - \frac{1}{f( \left[  c(n) \right]   )} \right).
\end{align*}
Since this estimate holds for infinitely many $n$, we see that
$$
\underset{n\rightarrow\infty}{\overline{\lim}}\,
\sum_{k>n} \frac{1}{b_k \log b_k}
\geq  C(\varepsilon)  \log 2,
$$
proving that $\sum_{k \geq 1} \frac{1}{b_k \log b_k}$ diverges.
\end{proof}%

\vspace{0.4cm}%

\begin{proof}
[\textbf{Proof of Theorem \ref{T_SLLN}.}]
~

\noindent\textbf{a)}
Since $\int_{I}%
\mathsf{a}^{\prime}\,d\mu_{\mathfrak{G}}=\infty$ by Lemma
\ref{L_PrimeDigitTail}, this is immediate from the ergodic theorem.

\vspace{0.4cm}%

\noindent
\textbf{b) }We apply Theorem 1.1 of \cite{AN} to $(I,\mathcal{B}_{I}%
,\mu_{\mathfrak{G}},S)$ and $\mathsf{a}^{\prime}$, observing that (in the
notation of that paper), $\mathfrak{N}_{\mathsf{a}^{\prime}}=1$ since
$J_{1}=\sum_{n\geq1}\left(  n^{2}\,\log n\,\log_{2}n\right)  ^{-1}<\infty$.
Furthermore, by using the estimate of Lemma \ref{L_PrimeDigitTail} and setting $a^{\prime}(N):=N/L^{\prime}(N)\sim\log2\cdot N/\log_{2}N$ we get that its
asymptotic inverse can be written as $b^{\prime}(N):=(N\,\log_{2}N)/\log2$ which by the statement of the paper coincides with the norming sequence.

\vspace{0.4cm}%

\noindent \textbf{c) }
Using Theorem \ref{T_PtwGrowth1} a), we first note that
$\sum_{n\geq1}1/(b_{n}\log b_{n})=\infty$ implies $\overline{\lim
}_{n\rightarrow\infty}\,b_{n}^{-1}\sum_{k=1}^{n}\mathsf{a}_{k}^{\prime}%
=\infty$\ a.e. since $\mathsf{a}_{n}^{\prime}\leq\sum_{k=1}^{n}\mathsf{a}%
_{k}^{\prime}$.

For the converse, assume that $\sum_{n\geq1}1/(b_{n}\log b_{n})<\infty$,
which by Lemma \ref{L_SeriesLemma} entails $ (n \log_2 n)/b_n \to 0$. In view of Theorem \ref{T_PtwGrowth1} a),
our assumption implies that $\mathsf{M}_{n}^{\prime}/b_n \to 0$ a.e.
Together with statement b) above, these observations prove (\ref{Eq_Dicho}), because
\[
\frac{1}{b_{n}}\sum_{k=1}^{n}\mathsf{a}_{k}^{\prime}=
\frac{\mathsf{M}_{n}^{\prime}}{b_{n}}+
\frac{n\log_{2}n}{\log2\cdot b_{n}}\cdot
\frac{\log2}{n\log_{2}n}
\left(
\sum_{k=1}^{n}\mathsf{a}_{k}^{\prime} - \mathsf{M}_{n}^{\prime}
\right)
\text{.}%
\]

\vspace{0.4cm}%

\noindent
\textbf{d) } Note first that letting $c_{n}^{\prime}:=d_{n}^{\prime}/\log
_{2}(10j)$ for $n\in(\overline{n}(j-1),\overline{n}(j)]$, provides us with a
non-decreasing sequence satisfying $\sum_{n\geq1}1/(c_{n}^{\prime}\log
c_{n}^{\prime})<\infty$ (use generous estimates). By Theorem
\ref{T_PtwGrowth1} therefore $\lambda(\{\mathsf{M}_{n}^{\prime}>c_{n}^{\prime}
$ i.o.$\})=0$. Since $c_{n}^{\prime}=o(d_{n}^{\prime})$, we see that for every
$\varepsilon>0$, $\{\mathsf{M}_{n}^{\prime}>\varepsilon\,d_{n}^{\prime}$
i.o.$\}\subseteq\{\mathsf{M}_{n}^{\prime}>c_{n}^{\prime}$ i.o.$\}$. Combining
these observations shows that
\begin{equation}
\frac{\mathsf{M}_{n}^{\prime}}{d_{n}^{\prime}}\longrightarrow0\text{ \quad
a.e.}\label{Eq_Rust}%
\end{equation}
Together with (\ref{Eq_TrimmedSLLN}) and $n\log_{ 2}  n/(\log2\cdot d_{n}^{\prime
})\leq1$, this proves, via
\begin{equation}
\frac{1}{d_{n}^{\prime}}\sum_{k=1}^{n}\mathsf{a}_{k}^{\prime}=\frac{n\log
_{2}n}{\log2\cdot d_{n}^{\prime}}\cdot\frac{\log2}{n\log_{2}n}\left(
\sum_{k=1}^{n}\mathsf{a}_{k}^{\prime}-\mathsf{M}_{n}^{\prime}\right)
+\frac{\mathsf{M}_{n}^{\prime}}{d_{n}^{\prime}}\text{,}\label{Eq_Rudi}%
\end{equation}
that
\[
\underset{n\rightarrow\infty}{\overline{\lim}}\,\frac{1}{d_{n}^{\prime}}%
\sum_{k=1}^{n}\mathsf{a}_{k}^{\prime}\leq1\text{ \ a.e.}%
\]
Specializing \eqref{Eq_Rudi}, and using \eqref{Eq_TrimmedSLLN} and
\eqref{Eq_Rust} again, we find that
\[
\frac{1}{d_{\overline{n}(j)}^{\prime}}\sum_{k=1}^{\overline{n}(j)}%
\mathsf{a}_{k}^{\prime}=\frac{\log2}{\overline{n}(j)\log_{2}\overline{n}%
(j)}\left(  \sum_{k=1}^{\overline{n}(j)}\mathsf{a}_{k}^{\prime}-\mathsf{M}%
_{\overline{n}(j)}^{\prime}\right)  +\frac{\mathsf{M}_{\overline{n}%
(j)}^{\prime}}{d_{\overline{n}(j)}^{\prime}}\longrightarrow1\text{ \quad a.e.}%
\]
as $j\rightarrow\infty$, and our claim (\ref{Eq_SLLNfunnynorm}) follows.
\end{proof}%

\vspace{0.4cm}%

\begin{proof}
[\textbf{Proof of Theorem \ref{T_PDversusD1}.}]
~

\noindent\textbf{a)}
Apply Theorem 4 of
\cite{ATZ} to the system $(I,\mathcal{B}_{I},\mu_{\mathfrak{G}},S)$ with
CF-mixing partition $\gamma:=\xi$. Statement (\ref{Eq_DirektAusATZ}) is
immediate if we take $(\mathsf{a}^{\prime},\mathsf{a}^{\prime})$ as our pair
$(\varphi,\psi)$ of $\gamma$-measurable functions, cf. Remark 3 in \cite{ATZ}.
Turning to the general version \eqref{Eq_ATZforStarStar}, we consider
$\varphi:=g\circ\mathsf{a}^{\prime}$ and $\psi:=\mathsf{a}^{\prime}$.
According to the result cited,
\[
\underset{n\rightarrow\infty}{\overline{\lim}}\,\frac{g(\mathsf{a}_{n}%
^{\prime})}{\sum_{k=1}^{n-1}\mathsf{a}_{k}^{\prime}}=\infty\text{ \ a.e. \quad
iff \quad}\int_{I}a^{\prime}\circ g\circ\mathsf{a}^{\prime}\,d\mu
_{\mathfrak{G}}=\infty\text{, }%
\]
(with $a^{\prime}$ from Lemma \ref{L_PrimeDigitTail}), while otherwise
$\lim_{n\rightarrow\infty}\,g(\mathsf{a}_{n}^{\prime})/(\sum_{k=1}%
^{n-1}\mathsf{a}_{k}^{\prime})=0$ \ a.e.
The present assertion merely
reformulates the divergence condition above:
We see (using \eqref{Eq_PNT2} and the regularity properties on $g$) that (for some constant $c>0$)
\begin{align*}
\int_{I}a\circ g\circ\mathsf{a}^{\prime}\,d\mu_{\mathfrak{G}}  & \asymp
\sum_{n\geq1}\frac{g(\mathrm{p}_{n})}{\log_{2}g(\mathrm{p}_{n})}%
\,\mu_{\mathfrak{G}}(I_{\mathrm{p}_{n}})\asymp\sum_{n\geq1}\frac{g(n\log
n)}{\log_{2}g(n\log n)}\frac{1}{(n\log n)^{2}}\\
& \asymp\int_{c}^{\infty}\frac{g(x\log x)}{\log_{2}g(x\log x)}\frac{dx}{(x\log
x)^{2}}\asymp\int_{c}^{\infty}\frac{g(y)}{\log_{2}g(y)}\frac{dy}{y^{2}\log
y}\text{.}%
\end{align*}%

\vspace{0.4cm}%

\noindent
\textbf{b)} Same argument as in a), this time with $\varphi:=g\circ
\mathsf{a}^{\prime}$ and $\psi:=\mathsf{a}$, and replacing $a^{\prime}$ above
by $a(t):=t/L(t)\sim\log2\cdot t/\log t$ as $t\rightarrow\infty$.

\vspace{0.4cm}%

\noindent
\textbf{c)}
We have
\begin{align*}
 \lim_{n\to\infty}\frac{\sum_{k=1}^n \mathsf{a}_k^{\prime}}{\sum_{k=1}^n \mathsf{a}_k}
 &\leq \lim_{n\to\infty}\frac{\sum_{k=1}^n \mathsf{a}_k^{\prime}-\mathsf{M}_n^{\prime}}{\sum_{k=1}^n \mathsf{a}_k-\mathsf{M}_n}+\lim_{n\to\infty}\frac{\mathsf{M}_n^{\prime}}{\sum_{k=1}^n \mathsf{a}_k-\mathsf{M}_n}\\
 &\leq \lim_{n\to\infty}\frac{n\log_2 n\log 2}{n\log n\log 2}+\lim_{n\to\infty}\frac{n\log_2 n\log 2}{n\log n\log 2}=0
\end{align*}
which follows by b) of Theorem \ref{T_SLLN} together with the Diamond-Vaaler trimmed law, $\log2\left(  \sum_{k=1}^{n}\mathsf{a}%
_{k}-\mathsf{M}_{n}\right)  /(n\log n)\rightarrow1$ a.e.
and finally by a) of Corollary \ref{cor: Ex_PtwGrowth1}.
\end{proof}%

\vspace{0.4cm}%

\begin{proof}
[\textbf{Proof of Theorem \ref{thm: exponents of a}}]
\textbf{a) }
We have that $\int (\mathsf{a}^{\prime})^{\gamma}\,\mathrm{d}\lambda<\infty$ and the statement follows by the ergodic theorem.

\vspace{0.4cm}%

\noindent\textbf{b) }
We may apply \cite[Theorem 1.7 \& erratum]{KS_birkhoff}.
That Property $\mathfrak{C}$ is fulfilled with the bounded variation norm $\|\cdot\|_{BV}$ is a standard result. For Property $\mathfrak{D}$, we notice that
$\|\mathsf{a}\cdot \mathbbm{1}_{\{\mathsf{a}\leq \ell\}}\|_{BV}\leq 2\ell$ and $\|\mathbbm{1}_{\{\mathsf{a}\leq \ell\}}\|_{BV}\leq 2$ implying that this property is fulfilled.

In order to calculate the norming sequence $(d_n)$ we notice that
\begin{align*}
 \mu_{\mathfrak{G}}\left( \left(\mathsf{a}^{\prime}\right)^{\gamma}>n\right)
 &=\mu_{\mathfrak{G}}\left( \mathsf{a}^{\prime}>n^{1/\gamma}\right)\\
 &\sim \frac{1}{\log 2\, n^{1/\gamma} \log n^{1/\gamma}}
 =\frac{\gamma}{\log 2\, n^{1/\gamma} \log n}=\frac{L(n)}{n^{1/\gamma}},
\end{align*}
where $L(n)=\gamma/(\log 2 \log n)$ is a slowly varying function.

Using then \cite[Theorem 1.7 \& erratum]{KS_birkhoff} we obtain that \eqref{eq: strong law gamma>1} holds for $(b_n)$ fulfilling $b_n=o(n)$ and $\lim_{n\to\infty}b_n\log_2 n=\infty$ and for $(d_n)$ fulfilling
\begin{align*}
 d_n\sim \frac{1/\gamma}{1-1/\gamma} n^{\gamma} b_n^{1-\gamma}\left( L^{-\gamma}\right)^{\#}\left(\left(\frac{n}{b_n}\right)^\gamma\right),
\end{align*}
where $\ell^\#$ denotes the de Bruijn conjugate of a slowly varying function $\ell$, see e.g.\ \cite{BGT} for a precise definition.
In our case $\left( L^{-\gamma}\right)^{\#}(n)=\left( (\log n)^\gamma (\log 2)^{\gamma}/\gamma^{\gamma}\right)^{\#}\sim \gamma^{\gamma}/\left((\log n)^\gamma (\log 2)^{\gamma}\right)$.
Hence,
\begin{align*}
 d_n\sim \frac{\gamma^{\gamma}}{(\gamma-1)(\log 2)^{\gamma}} n^{\gamma} b_n^{1-\gamma} \frac{1}{\left(\log \left(n/b_n\right)^\gamma\right)^{\gamma}}
 \sim \frac{1}{(\gamma-1)(\log 2)^{\gamma}}\cdot \frac{n^{\gamma}b_n^{1-\gamma}}{(\log n)^{\gamma}},
\end{align*}
where the last assymptotic follows from the assumption $b_n=o(n^{1-\epsilon})$.
\end{proof}

\section{Proofs of the results on distributional convergence}

We are now ready for the proofs of our distributional convergence results.

\begin{proof}[\textbf{Proof of Theorem \ref{thm: exponents of a in prob}}]
In all cases we only need to check convergence in law w.r.t.\ $\mu_{\mathfrak{G}}$.
\vspace{0.4cm}%

 \noindent\textbf{a) }
 This follows directly from Theorem \ref{thm: exponents of a}.

\vspace{0.4cm}%

 \noindent\textbf{b) }
 This statement follows directly by \cite{A2}. We use the expression for $L'(N)$ from \eqref{Eq_PrimeDigitWR}
which is a slowly varying function.
Since for $b_n=n\log_2 n/\log 2$ we have $nL(b_n)\sim b_n$ the statement follows.

\vspace{0.4cm}%

\noindent\textbf{c) }
This follows from \cite[Theorem 1.8]{KS_mean}. The conditions on the system and the asymptotic of the norming sequence $(d_n)$ we have already considered in the proof of Theorem \ref{thm: exponents of a}.
\end{proof}

\vspace{0.4cm}%

\begin{proof}[\textbf{Proof of Theorem \protect\ref{T_NewPoisson}}]
In each of the three statements it suffices to prove distributional
convergence under the invariant measure $\mu _{\mathfrak{G}}$ (see
Propositions 3.1 and 5.1 in \cite{ZHit}).

\noindent\textbf{a)} For $A\in \mathcal{B}_{I}$ with $\lambda (A)>0$, the \emph{%
(first) hitting time} function of $A$ under the Gauss map $S$, $\varphi
_{A}:I\rightarrow \overline{\mathbb{N}}:=\{1,2,\ldots ,\infty \}$ is given
by $\varphi _{A}(x):=\inf \{n\geq 1:S^{n}x\in A\}$, which is finite a.e.\ on $%
I$. Define $S_{A}x:=S^{\varphi _{A}(x)}x$ for a.e.\ $x\in I$, which gives the
\emph{first entrance map} $S_{A}:I\rightarrow A$. Letting $A_{l}^{\prime
}:=\{\mathsf{a}^{\prime }\geq l\}$, $l\geq 1$, we see that $\varphi
_{l}^{\prime }=\varphi _{A_{l}^{\prime }}$ and, more generally, $\varphi
_{l,i}^{\prime }=\varphi _{A_{l}^{\prime }}\circ S_{A_{l}^{\prime }}^{i-1}$
for $i\geq 1$.\ It is clear that $A_{l}^{\prime }$ is $\xi $-measurable, and
according to Lemma \ref{L_PrimeDigitTail}, $\mu _{\mathfrak{G}}\left(
A_{l}^{\prime }\right) \sim (\log 2\cdot l\log l)^{-1}$ as $l\rightarrow
\infty $. Therefore, Theorem 10.2.a) of \cite{ZHit} immediately implies
statement a).

\vspace{0.4cm}

\noindent\textbf{b)} This is a straightforward consequence of Theorem 10.2.b) in
\cite{ZHit}, because $\{\mathsf{a}^{\prime }\geq l/\vartheta
\}=A_{\left\lfloor l/\vartheta \right\rfloor }^{\prime }$ is $\xi $%
-measurable and (\ref{Eq_PrimeDigitTail}) entails $\mu _{\mathfrak{G}}\left(
\mathsf{a}^{\prime }\geq l/\vartheta \right) \sim \vartheta \,\mu _{%
\mathfrak{G}}\left( \mathsf{a}^{\prime }\geq l\right) $ as $l\rightarrow
\infty $.

\vspace{0.4cm}

\noindent\textbf{c)} Let $\mathbb{P}(j):=\{\mathrm{p}\in \mathbb{P}:\mathrm{p}\equiv
j $ (mod $m$)$\}$, then Dirichlet's PNT for primes in residue classes (e.g.
Theorem 4.4.4 of \cite{Ja}) asserts that for each $j$ relatively prime to $m$%
,%
\begin{equation*}
\#\left( \mathbb{P}(j)\cap \{2,\ldots ,N\}\right) \sim \frac{1}{\phi (m)}%
\frac{N}{\log N}\quad \text{as }N\rightarrow \infty \text{.}
\end{equation*}%
Via an easy argument parallel to the proof of (\ref{Eq_PrimeDigitTail}),
this shows that
\begin{equation*}
\mu _{\mathfrak{G}}\left( A_{l}^{\prime }\cap \left\{ \mathsf{a}^{\prime
}\equiv j\text{ (mod }m\text{)}\right\} \right) \sim \frac{1}{\phi (m)\log 2}%
\cdot \frac{1}{l\log l}\text{ \quad as }l\rightarrow \infty \text{,}
\end{equation*}%
and hence $\mu _{\mathfrak{G}}\left( A_{l}^{\prime }(j)\right) \sim \mu _{%
\mathfrak{G}}\left( A_{l}^{\prime }\right) /\phi (m)$ with $A_{l}^{\prime
}(j):=A_{l}^{\prime }\cap \left\{ \mathsf{a}^{\prime }\equiv j\text{ (mod }m%
\text{)}\right\} $ a $\xi $-measurable set. Another direct application of
Theorem 10.2.b) in \cite{ZHit} then completes the proof of our theorem.
\end{proof}

\vspace{0.4cm}%

The result thus established essentially contains (\ref{Eq_StrgDistrCgeMax}).

\begin{proof}[\textbf{Proof of Theorem \protect\ref{T_ForPhilipp}}]
Theorem \ref{T_NewPoisson} a) contains the statement that $\mu _{\mathfrak{G}%
}\left( A_{l}^{\prime }\right) \varphi _{l,1}^{\prime }$ converges to a
standard exponential law. Using the natural duality $\{\mathsf{M}%
_{n}^{\prime }<l\}=\{\varphi _{l,1}^{\prime }\geq n\}$ this is easily seen
to imply (\ref{Eq_StrgDistrCgeMax}).
\end{proof}

\vspace{0.4cm}%

\begin{proof}[\textbf{Proof of Theorem \ref{thm: counting CLT}.}]
 The result follows directly by \cite[Theorem 3]{KS} by considering the sets $A_n=\{\mathsf{a}_n\in \mathbb{    P}\cap \Gamma_n\}$.
 The only thing to check is that
 \begin{equation}\label{eq: var inf}
  \sum_{n=1}^{\infty}\lambda(A_n)\cdot \lambda(A_n^c)=\infty.
 \end{equation}

 We note that $\sum_{n=1}^{\infty}\lambda(A_n)\cdot \lambda(A_n^c)\geq \sum_{n=1}^{\infty}\lambda(A_n)$ and thus for \eqref{en: clt 1} \eqref{eq: var inf} follows from the proof of Theorem \ref{T_PtwGrowth1} a). \eqref{en: clt 2} corresponds to \cite[Theorem 5A]{KS} and for \eqref{en: clt 3} \eqref{eq: var inf} follows from the proof of Theorem \ref{T_PtwGrowth1} c).
\end{proof}

\vspace{0.4cm}%

\vspace{0.4cm}%

\end{document}